\newtheorem{thm}{Theorem}
\newtheorem{lem}[thm]{Lemma}
\newtheorem{cor}[thm]{Corollary} \newtheorem{prop}[thm]{Proposition}
\newtheorem{nota}{Notation}
\newtheorem{conj}{Conjecture}
\newtheorem{ex}[thm]{Example} \newtheorem{defi}[thm]{Definition}
\newtheorem{rem}{Remark}
\newtheorem{rems*}{Remarks}
\newtheorem{problem*}{Problem}
\newtheorem{conj*}{Conjecture}
\newcommand{\sO}{\mathscr{O}}
\newcommand{\cI}{\mathcal{I}}
 \DeclareMathOperator{\hh}{h}
\newcommand{\Z}{\mathbb Z} \newcommand{\C}{\mathbb K}
 \newcommand{\p}{\mathbb P}
\numberwithin{equation}{section}
\begin{document}
\title{Laplace equations, Lefschetz properties and line arrangements}
\author[rdg]{Roberta Di Gennaro
}
\author[gi]{Giovanna Ilardi}
\date{}
\keywords{Weak and Strong Lefschetz Properties, Laplace equations,  Line arrangements}
\subjclass[2010]{13E10, 14N20, 53A20, 14C20, 14M20}
\begin{abstract}
In this note we extend the main result in \cite{DIV} on artinian ideals failing Lefschetz properties, varieties satisfying Laplace equations and existence of suitable singular hypersurfaces. Moreover we characterize the minimal generation of ideals generated by powers of linear forms by the configuration of their dual points in the projective plane and we use this result to improve some propositions on line arrangements and Strong Lefschetz Property at range $2$ in \cite{DIV}. The starting point was an example in \cite{CHMN16}. Finally we show the equivalence among failing SLP, Laplace equations and some unexpected curves introduced in \cite{CHMN16}.
\end{abstract}\maketitle
\section{Introduction}
Recently there has been an increasing interest both on ideals failing Lefschetz Pro\-perties and on arrangements of hyperplanes. In \cite{DIV} the authors link these two topics. In fact, given a suitable line arrangement in the projective plane, i.e. a finite collection of lines, they relate the unstability of its derivation bundle (see Section \ref{arrang_sec}) to the failure of the Strong Lefschetz Property at range 2  of the ideal generated by suitable powers of the linear forms defining the lines (see Section \ref{lef_sec}). This linkage is deepened in \cite{CHMN16} where the authors add the equivalent condition of existence of a suitable unexpected curve.
\\ Here we recover these arguments in order to improve and generalize \cite{DIV}. Precisely, in Section \ref{lef_sec} we generalize the main theorem in \cite{DIV} to the case when there exist syzygies of suitable degree (Theorem \ref{thgen}), and in Section \ref{arrang_sec} we reformulate and improve some results in \cite{DIV}. More precisely, given an artinian ideal $I$ ge\-nerated by powers of linear forms, we characterize geometrically the existence of syzygies of degree $0$. Let $Z$ be
the set of points which are dual to these linear forms, then the existence of $0$-syzygies in $I$ is related to the number of aligned points in $Z$ (Theorem \ref{th_aligned}). In this way we restate some results (Proposition \ref{prop_bundle} and Conjecture \ref{conj_terao}) in a more geometric form. The note ends with the equivalence among the existence of unexpected curves, the failure of SLP at range 2 and the existence of Laplace equations in suitable cases (Corollary \ref{cor_unexp}).

\section{Lefschetz properties}\label{lef_sec}
Let $\mathbb K$ be an algebraically closed field of characteristic $0$, $R=\mathbb K[x_0,\ldots, x_n]=\bigoplus R_t$ be the graded polynomial ring in $n+1$ variables over $\mathbb K$ and $r_t= \dim R_t=\dim H^0(\mathscr O_{\p^n}(t))=\left( \begin{array}{c}t+n\\n\end{array}\right)$, when $n$ is fixed.
\\Let
$$A=R/I= \bigoplus_{i=0}^{m}A_i$$ be a graded artinian algebra, defined by a homogeneous ideal $I$. Note that $A$ is finite dimensional over $\mathbb K$. We denote by $H_A=H_{R/I}$ the Hilbert function of $R/I$.
\\First of all, we recall the definition of algebra (or ideal) failing the Lefschetz Properties.
\begin{defi}
 The artinian algebra $A$ (or the artinian ideal $I$) fails the Weak Lefschetz Property (from now on WLP) if  for any linear form $L$   there exists $i$  such that the multiplication map by $L$,
$ \times L : A_i \rightarrow A_{i+1},$
 does not have maximal rank (i.e. is neither injective nor surjective). More precisely, $A$ (or $I$) fails WLP \em by $\delta$\em\ if the multiplication map has rank $\min\{H_{R/I}(i),H_{R/I}(i+1)\}-\delta$(see \cite{CN13}).\end{defi}
Similarly, we define the failure of the Strong Lefschetz Property by $\delta$.
\begin{defi} The artinian algebra $A$ (or the artinian ideal $I$)  fails the Strong
Lefschetz Property (from now on SLP) at range $k$ and degree $i$ by $\delta$ (with $\delta \geq 1$) if  for any linear form $L$  the multiplication map $ \times L^k : A_i \rightarrow A_{i+k},$ has rank $\min\{H_{R/I}(i),H_{R/I}(i+k)\}-\delta$.
\end{defi}
One of the main examples comes from a classical result of Togliatti: the ideal $I=(x^3,y^3,z^3,xyz)$ fails the WLP in degree $2$ by $1$. In \cite[Example 3.1]{BK} this ideal is studied, but it appeared in \cite{T} in terms of projection center of the Veronese surface (see also \cite{DI} for a modern approach and below for some details).
\\
In \cite{DIV} the authors of this note and J. Vall\`{e}s characterize artinian ideals failing SLP at range $k$ by $\delta$ in terms of suitable projections of Veronese varieties satisfying Laplace equations and of the existence of suitable singular hypersurfaces. In order to state our main result we recall some definitions and results from \cite{DIV}.
\begin{defi}
Let $I=(F_1, \ldots, F_r) \subset R$ be an artinian ideal generated by
forms of degree $d$. The syzygy bundle $K$ is defined by the exact sequence
$$  \begin{CD}
 0@>>> K @>>>  \mathscr O _{\p^{n}}^{r} @>\Phi_{I}>>
 \mathscr O _{\p^{n}}(d) @>>> 0,
\end{CD}$$
where $\Phi_{I}(a_1,\ldots,a_r)=a_1F_1+\ldots+a_rF_r.$
\end{defi}
\begin{thm}\cite[Theorem 4.1]{DIV}
\label{p1}
 Let $I=(F_1, \ldots, F_r) \subset R$ be an artinian ideal generated by
forms of degree $d$ and  $K$ the syzygy bundle.
Let $i$ be a non-negative integer such that $ \mathrm{h}^0( K(i))=0$ and $k$ be an integer such that $k\ge 1$.
Then $I$ fails the SLP at the range  $k$ in degree $d+ i-k$  if and only if the induced homomorphism on global sections
(denoted by $\mathrm{H}^0(\Phi_{I,L^k})$)
$$  \begin{CD}
  \mathrm{H}^0(  \mathscr O _{L^k}(i))^r @>\mathrm{H}^0(\Phi_{I,L^k})>>
 \mathrm{H}^0(  \mathscr O _{L^k}(i+d))
\end{CD}$$
does not have maximal rank for a general linear form $L$.
\end{thm}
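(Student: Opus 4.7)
The plan is to prove the equivalence via a snake-lemma argument that compares the map $\phi := H^0(\Phi_{I,L^k})$ on the scheme-theoretic $k$-thickening $L^k\subset\mathbb{P}^n$ with the multiplication-by-$L^k$ map on the graded pieces of $A=R/I$. First I would twist the defining sequence of the syzygy bundle by $i-k$ and by $i$, and connect the two twists vertically by multiplication by $L^k$; the cokernel column then reproduces the third sequence $0\to K(i)|_{L^k}\to \mathscr{O}_{L^k}(i)^r\to \mathscr{O}_{L^k}(d+i)\to 0$. Taking global sections on $\mathbb{P}^n$ (for $n\geq 2$ the relevant $H^1(\mathscr{O}_{\mathbb{P}^n}(a))$ all vanish) produces a commutative diagram with exact rows
$$\begin{CD}
0 @>>> R_{i-k}^r @>\times L^k>> R_i^r @>>> H^0(\mathscr{O}_{L^k}(i))^r @>>> 0 \\
@. @VV\Phi_I V @VV\Phi_I V @VV\phi V \\
0 @>>> R_{d+i-k} @>\times L^k>> R_{d+i} @>>> H^0(\mathscr{O}_{L^k}(d+i)) @>>> 0,
\end{CD}$$
in which the first two vertical maps have kernels $H^0(K(i-k))$ and $H^0(K(i))=0$ (by hypothesis) and cokernels $A_{d+i-k}$ and $A_{d+i}$ (using that $I$ is generated in degree $d$).

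Next I would apply the snake lemma. Because the middle column has zero kernel, the snake sequence collapses to
$$0 \longrightarrow \ker\phi \longrightarrow A_{d+i-k} \xrightarrow{\;\delta\;} A_{d+i} \longrightarrow \mathrm{coker}\,\phi \longrightarrow 0.$$
Unwinding the snake construction shows that $\delta$ sends the class of $g\in R_{d+i-k}$ to the class of $L^kg\in R_{d+i}$, so $\delta$ is precisely multiplication by $L^k$ on $A$. Consequently $\ker(\times L^k)\cong\ker\phi$ and $\mathrm{coker}(\times L^k)\cong\mathrm{coker}\,\phi$. Moreover, alternating sum of dimensions in the four-term sequence gives $\dim A_{d+i-k}-\dim A_{d+i}=r\,h^0(\mathscr{O}_{L^k}(i))-h^0(\mathscr{O}_{L^k}(d+i))$, so the source of $\times L^k$ exceeds its target exactly when the source of $\phi$ does. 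Combined with the isomorphisms of kernels and cokernels, this forces $\times L^k$ to have maximal rank for a general $L$ if and only if $\phi$ does, which is the stated equivalence.

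The main obstacle I anticipate is the identification of $\delta$ with $L^k$-multiplication on $A$: the diagram chase is formal, but one has to verify that the vertical maps $\Phi_I$ truly intertwine the horizontal $L^k$-multiplications so that the induced map on cokernels is $L^k$-multiplication rather than some twisted variant coming from the choice of generators $F_j$. A secondary bookkeeping issue is the cohomological vanishing needed to keep the rows short exact (automatic for $n\geq 2$) and to guarantee that restriction to $L^k$ loses no $H^0$ in the top row; in small-dimensional or small-degree corners these vanishings would need to be checked by hand. The appeal to a \emph{general} $L$ enters only to ensure, via semicontinuity, that the hypothesis $h^0(K(i))=0$ continues to control the behaviour after restriction to the thickened hyperplane $L^k$.
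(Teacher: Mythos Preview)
Your argument is correct and is essentially the same as the one the paper uses (the paper cites this result from \cite{DIV} and reproduces the argument for the generalization Theorem~\ref{p1bis}, saying it ``exploits the same ideas''). The only difference is packaging: the paper restricts $K$ itself to the thickened hyperplane and reads off the four-term sequence from the long exact cohomology sequence of $0\to K(i-k)\to K(i)\to K\otimes\mathscr O_{L^k}(i)\to 0$, together with the identification $A_{d+j}=H^1(K(j))$; you instead restrict $\mathscr O^r$ and $\mathscr O(d)$ and run the snake lemma on the resulting $3\times 3$ diagram. These are two ways of writing the same diagram chase, and both land on the identical exact sequence $0\to\ker\phi\to A_{d+i-k}\to A_{d+i}\to\mathrm{coker}\,\phi\to 0$.

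One small remark: your worry about identifying $\delta$ with $\times L^k$ is misplaced. The map you call $\delta$ is not the connecting homomorphism of the snake lemma but simply the induced map on cokernels, which by construction is the map induced by the bottom horizontal arrow $R_{d+i-k}\xrightarrow{\times L^k}R_{d+i}$; no unwinding is required. The genuine connecting map is the inclusion $\ker\phi\hookrightarrow A_{d+i-k}$, and that one does require a (trivial) chase. Also note that your setup automatically forces $H^0(K(i-k))=0$ (it injects into $H^0(K(i))=0$ since $K$ is locally free), so there is no hidden hypothesis there.
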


\begin{defi}
 Let $X\subset \mathbb P^N$ be a projective $n$-dimensional complex variety. For $m\geq 1$, the projective $m$-th osculating space to $X$ at a general point $P$, $T_P^{m}(X)$, is the subspace of $ \mathbb P^N$ spanned by $P$ and by all the derivative points of order less  than or equal to $m$ of a local parametrization of $X$, evaluated at $P$. Of course, for $m=1$ we get the tangent space $T_P(X)$.
\end{defi}
\noindent We remark that the expected dimension of the $m$-th osculating space is $$ \mathrm{expdim} T_P^m(X)=\mathrm{inf}\left(\binom{n+m}{n}-1,N\right).$$
\begin{defi}A $n$-dimensional variety $X\subset \mathbb P^N$  satisfies  $ \delta$ independent Laplace equations of order $m$ if the $m$-th osculating space at a general point has $$\dim T_P^{m}(X)=
\mathrm{expdim} T_P^m(X)-\delta.$$\\
If $N < \binom{n+m}{n}-1$, then there are always $\binom{n+m}{n}-1-N$ relations between the partial derivatives. We call these relations \lq\lq trivial\rq\rq\ Laplace equations of order $m$. \end{defi}
\par\vspace{3mm} Among the varieties satisfying non trivial Laplace equations, we are inte\-rested in rational varieties which are suitable projections of a Veronese variety.
\\ For any vector space $V$, $V^*=\mathrm{Hom}_{\C}(V,\C)$ will be the dual space.\\
 Let $v_{t} : [L] \in \mathbb P (R_1^*)\hookrightarrow [L^{t}]\in \mathbb P (R_{t}^*)$
be the $t$-uple Veronese embedding whose image  $v_t(\mathbb P^n)$ is the Veronese $n$-fold of order $t$.
\\Let $I=(F_1, \ldots, F_r)$  be an ideal generated by $r$ forms of degree $d$ and $I_h$ be the homogeneous component of degree $h$ of $I$ for any $h$.
\begin{defi}The apolar space of $I$ in degree $d+i$ with $i\geq 0$ is
$$  I_{d+i}^{\perp}=\{\Delta \in R_{d+i}^*\,|\,\Delta(F)=0, \,\, \forall F\in I_{d+i}\},$$ where the canonical basis of $R_{d+i}^*$ is given by the $r_{d+i}=\binom{d+i+n}n$ derivations $ \frac{\partial^{d+i}}{\partial x_0^{i_{0}}\ldots \partial x_n^{i_n}}$ with $ i_0+\ldots +i_n=d+i.$ \end{defi}
There is the exact sequence of vector spaces
$$  \begin{CD}
0 @>>> I_{d+i}^{\perp} @>>>R_{d+i}^* @>>> I_{d+i}^* @>>> 0
\end{CD}$$
and, by dualizing it, one can identify $R_{d+i}/I_{d+i}\simeq (I_{d+i}^{\perp})^*$ and write the decomposition
$R_{d+i}=I_{d+i} \oplus (I_{d+i}^{\perp})^*.$
\\ By denoting the corresponding  projection map
$$\begin{CD}
\pi_{I_{d+i}}: \mathbb P(R^*_{d+i})\setminus \mathbb P(I^*_{d+i}) \rightarrow \mathbb P(I^\perp _{d+i})
  \end{CD},
$$ we consider the variety $X:=\pi_{I_{d+i}}(v_{d+i}(\mathbb P^n))$.
\begin{rem}
The toric case is the easiest one: when $I_d$ is generated by $r$ monomials of degree $d$,   $(I_{d}^{\perp})^*$ is  generated by
the  remaining $r_d-r$ monomials.
\\ The case of powers of linear forms is very interesting. In \cite{EI} it is proved that the apolar of an ideal generated by powers of linear forms is related to the $0$-dimensional scheme of the dual points of the linear forms, as in the following Theorem.
\begin{thm}\cite{EI}\label{EI}
Let $l_1, \ldots, l_r$ be linear forms in $\mathbb P^n$, $d_1,\ldots,d_r >0$ be integers and $I=(l_1^{d_1},\ldots,l_r^{d_r})$. If $P_i=l_i^\vee$ denotes the dual point of $l_i$, then for any $j\geq \max{d_i}$,
$$\dim_{\mathbb K}\left(\frac R I\right)_j = \dim_{\mathbb K} \bigcap_{i=0,\ldots,r}I_{P_i}^{j-d_i+1}.$$
\end{thm}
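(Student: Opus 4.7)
The plan is to exploit Macaulay's inverse systems (apolarity). Identify $R_j^*$ with the degree-$j$ piece of the dual polynomial ring $S=\mathbb K[y_0,\ldots,y_n]$ via the perfect pairing $\langle x^\alpha, y^\beta\rangle=\alpha!\,\delta_{\alpha\beta}$, equivalently given by contraction $\langle f,g\rangle = f(\partial_{y_0},\ldots,\partial_{y_n})\cdot g$. Under this identification, $(R/I)_j^*\cong I_j^\perp$, so the first step is to reduce the theorem to computing $\dim_{\mathbb K} I_j^\perp$ as a subspace of $S_j$. Using the adjunction $\langle fg,h\rangle = \langle f, g\circ h\rangle$, a form $g\in S_j$ lies in $I_j^\perp$ if and only if $\langle h\, l_i^{d_i},g\rangle=0$ for every generator $l_i^{d_i}$ and every $h\in R_{j-d_i}$, which (because the pairing on $R_{j-d_i}\times S_{j-d_i}$ is again perfect) is equivalent to $l_i^{d_i}\circ g=0$ for each $i$. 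Note here the hypothesis $j\ge \max d_i$ is what guarantees that $j-d_i\ge 0$ and all apolarity steps are legitimate.

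The crux is then \emph{Emsalem's lemma}: for any nonzero $l\in R_1$ with dual point $P=l^\vee\in\mathbb P(S_1)$ and any integers $d\ge 1$, $j\ge d$,
\[
\{\,g\in S_j : l^d\circ g=0\,\} = \bigl(I_P^{\,j-d+1}\bigr)_j.
\]
To prove this, I would perform a linear change of coordinates sending $l$ to $x_0$, so that $P=[1:0:\cdots:0]$ and $I_P=(y_1,\ldots,y_n)$. Writing $g\in S_j$ uniquely as $g=\sum_{a=0}^{j} y_0^{a}\, h_{j-a}(y_1,\ldots,y_n)$ with $h_b$ homogeneous of degree $b$, one computes
\[
x_0^d\circ g=\sum_{a=d}^{j}\frac{a!}{(a-d)!}\,y_0^{a-d}\,h_{j-a},
\]
and since the monomials $y_0^{a-d}h_{j-a}$ are linearly independent across different $a$, this vanishes iff $h_b=0$ for all $b\le j-d$, i.e.\ iff every monomial of $g$ has $(y_1,\ldots,y_n)$-degree at least $j-d+1$, iff $g\in I_P^{\,j-d+1}$.

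Combining the two steps yields
\[
I_j^\perp=\bigcap_{i=1}^r \bigl(I_{P_i}^{\,j-d_i+1}\bigr)_j,
\]
and taking dimensions gives the desired equality. The only genuine content is Emsalem's lemma; the rest is the formal behaviour of the apolarity pairing. I expect no serious obstacle, since the hypothesis $j\ge \max d_i$ keeps all exponents positive and rules out degenerate cases (in characteristic zero the factorials $a!/(a-d)!$ are invertible, so the equivalence in the computation above is exact).
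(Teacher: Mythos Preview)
The paper does not supply its own proof of this theorem: it is quoted verbatim as a result of Emsalem and Iarrobino (reference \cite{EI}) and used as a black box, so there is nothing in the present paper to compare your argument against. Your apolarity argument is correct and is essentially the standard proof of this result: the identification $(R/I)_j^*\cong I_j^\perp$, the reduction of $g\in I_j^\perp$ to the conditions $l_i^{d_i}\circ g=0$, and the explicit coordinate computation showing $\{g\in S_j: l^d\circ g=0\}=(I_P^{\,j-d+1})_j$ are exactly the ingredients one needs, and your use of the hypothesis $j\ge\max d_i$ is accurate.
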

\end{rem}
\begin{nota} For any $i\ge 0$, $k\ge 1$, we denote $N(r,i,k,d):=r(r_i-r_{i-k})- (r_{d+i}-r_{d+i-k})$,
$N^{+}=\mathrm{sup}(0,N(r,i,k,d))$ and $N^{-}=\mathrm{sup}(0,-N(r,i,k,d)).$
\end{nota}
\begin{rem}\label{remark}
Note that when $\mathrm h^0 (K(i))=0$ then $N(r,i,k,d)=H_{R/I}(d+i-k)-H_{R/I}(d+i)=\dim \ker (\times L^k) - \dim \mathrm{coker} (\times L^k)$.
\\Moreover if $N < \binom{n+d+i-k}{n}-1$ then the number of trivial equations is exactly
$N^+ + \dim I_{d+i-k}$.
\end{rem}
The main result in \cite{DIV} (that generalizes to SLP the result given in \cite{MMO} on WLP) is the following.
\begin{thm}\cite[Theorem 5.1]{DIV}
\label{th1bis}
Let $I=(F_1, \ldots, F_r) \subset R$ be an  artinian ideal generated by $r$ homogeneous polynomials  of degree $d$.
Let $i,k,\delta$ be integers such that $
i\ge 0$, $k\ge 1$.
Assume that there is no syzygy  of degree $i$  among the $F_j$'s.
The following conditions are equivalent:
\begin{enumerate}
\item The ideal $I$ fails the SLP at the range  $k$ in degree $d+ i-k$.
\item For a general linear form $L$ of $\p^n$, there exist  $N^{+}+\delta$, with $\delta \ge 1$,  independent vectors  $(G_{1j},\ldots, G_{rj})_{j=1, \ldots,N^{+}+\delta }\in R_i^{\oplus r}$ and
$N^{+}+\delta $ forms $G_j\in R_{d+ i-k}$ such that
$G_{1j}F_1+ \ldots + G_{rj}F_r=L^kG_j$.
\item
 The $n$-dimensional variety $\pi_{I_{d+i}}(v_{d+i}(\p^n))$ satisfies
$\delta \ge 1$ Laplace equations of order $d+i-k$.
\item \label{item_iv_thm} For any  $L\in R_1$, $\mathrm{dim}_{\C}((I_{d+i}^{\perp})^*\cap \mathrm{H}^0(\cI_{L^{\vee}}^{d+i-k+1}(d+i))\ge N^{-}+\delta$, with $\delta \ge  1$.
\end{enumerate}
\end{thm}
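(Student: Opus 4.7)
The plan is to reduce all four conditions to a common statement about the kernel (resp.\ cokernel) of the multiplication map $\times L^k: A_{d+i-k}\to A_{d+i}$ for general $L$, using Theorem \ref{p1} as the pivot. Under the hypothesis $\mathrm h^0(K(i))=0$, Theorem \ref{p1} already identifies (1) with the failure of maximal rank of $\mathrm H^0(\Phi_{I,L^k})$ for general $L$; unwinding this through the exact sequence defining $K$ restricted to $L^k$, the rank deficit is controlled by the single quantity $\dim(L^k R_{d+i-k}\cap I_{d+i})$. I will then recognize each of (2), (3), (4) as a different avatar of this dimension count.

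For (1)$\Leftrightarrow$(2): the hypothesis forces $R_i^{\oplus r}\to I_{d+i}$ to be an isomorphism (and, by multiplication by $L^k$, kills syzygies of every smaller degree, so $\dim I_{d+i-k}=rr_{i-k}$), hence each $L^k G\in L^k R_{d+i-k}\cap I_{d+i}$ lifts uniquely to a tuple $(G_{1j},\ldots,G_{rj})$ with $\sum_\ell G_{\ell j}F_\ell=L^k G_j$. Trivial lifts are those with $G_j\in I_{d+i-k}$, and the number of independent non-trivial lifts equals $\dim\ker(\times L^k)=N^{+}+\delta$ precisely when $\times L^k$ has rank $\min-\delta$.

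For (1)$\Leftrightarrow$(3): I will identify the $(d+i-k)$-th osculating space to $v_{d+i}(\p^n)$ at $[L^{d+i}]$ with the projectivization of $L^k R_{d+i-k}\subset R_{d+i}$, since the partial derivatives of $L^{d+i}$ of order at most $d+i-k$ span exactly $L^k\cdot R_{d+i-k}$. After projection by $\pi_{I_{d+i}}$, the osculating space to $X$ has projective dimension $r_{d+i-k}-1-\dim(L^k R_{d+i-k}\cap I_{d+i})$, and subtracting the trivial Laplace equations (the $N^{+}+\dim I_{d+i-k}$ contribution described in Remark \ref{remark}) leaves exactly $\delta$ non-trivial ones.

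For (1)$\Leftrightarrow$(4): a direct derivative computation shows that under the apolar pairing on $R_{d+i}$ the annihilator of $L^k R_{d+i-k}$ is precisely $\mathrm H^0(\cI_{L^\vee}^{d+i-k+1}(d+i))$, the inverse-system description of a fat point at $L^\vee$. Viewing $(I_{d+i}^\perp)^*$ as the apolar complement of $I_{d+i}$ in $R_{d+i}$ and using that apolar annihilation turns sums into intersections, one obtains
\[
(I_{d+i}^\perp)^*\cap \mathrm H^0(\cI_{L^\vee}^{d+i-k+1}(d+i)) \;=\; \bigl(I_{d+i}+L^k R_{d+i-k}\bigr)^{\perp_{\mathrm{ap}}},
\]
whose dimension equals $\dim\mathrm{coker}(\times L^k)=N^{-}+\delta$ in the SLP-failing case. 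The main obstacle will be the careful bookkeeping of the trivial constants $N^{\pm}$: depending on the sign of $N(r,i,k,d)=H_{R/I}(d+i-k)-H_{R/I}(d+i)$, SLP failure by $\delta$ appears as an $(N^{+}+\delta)$-dimensional kernel or as an $(N^{-}+\delta)$-dimensional cokernel of $\times L^k$, and the two sides must be unified via the identity $N^{+}-N^{-}=N(r,i,k,d)$.
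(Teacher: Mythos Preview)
The paper does not give its own proof of Theorem~\ref{th1bis}; it is quoted verbatim from \cite{DIV}. What the paper does prove is the generalization Theorem~\ref{thgen} (allowing $\hh^0(K(i))=s>0$), and that proof is the natural comparison point.

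Your approach and the paper's proof of Theorem~\ref{thgen} are essentially the same. Both pivot on Theorem~\ref{p1}/\ref{p1bis} to identify $\ker(\times L^k)$ and $\mathrm{coker}(\times L^k)$ with cohomology of $K\otimes\sO_{L^k}(i)$; both recognize the $(d+i-k)$-th osculating space to $v_{d+i}(\p^n)$ at $[L^{d+i}]$ as $\p(L^kR_{d+i-k})$ and count its intersection with the projection center $\p(I_{d+i}^*)$; and both dualize to obtain condition~(\ref{item_iv_thm}). Your write-up is slightly more explicit in invoking the apolar pairing to identify the annihilator of $L^kR_{d+i-k}$ with $\HH^0(\cI_{L^\vee}^{d+i-k+1}(d+i))$, whereas the paper phrases the same step geometrically in terms of hyperplanes of $\p(R_{d+i}^*)$ containing both $\p(I_{d+i}^*)$ and the osculating space. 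Neither version gains anything the other lacks.

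One small point of caution in your treatment of (1)$\Leftrightarrow$(2): you correctly observe that the non-trivial lifts (those with $G_j\notin I_{d+i-k}$) are counted by $\dim\ker(\times L^k)$, but condition~(2) as literally stated counts \emph{all} independent tuples $(G_{1j},\ldots,G_{rj})$, which is $\dim(I_{d+i}\cap L^kR_{d+i-k})=\dim\ker(\times L^k)+rr_{i-k}$. When $i\ge k$ this exceeds $N^{+}$ even if SLP holds, so the equivalence requires either reading (2) modulo the trivial solutions or restricting to $i<k$. This is a wrinkle in the quoted statement rather than a flaw in your strategy, but you should flag it; the paper sidesteps the issue in Theorem~\ref{thgen} by replacing condition~(2) with the cleaner kernel/cokernel conditions~(\ref{item_ker}) and~(\ref{item_coker}).
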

 As noted in \cite{DIV} the hypothesis on the global syzygy in Theorem \ref{th1bis} is not restrictive in case of syzygies of degree $i\geq 1$.
%
\begin{lem}\cite[Lemma 5.2]{DIV}
 \label{lem-syz}
Let $I$ be the ideal $(L_1^{d},\ldots,L_r^{d})$  where the $L_j$ are general linear forms and $r<r_d$. Let $K$ be its syzygy bundle and $i\geq 1$. Then
$$ \hh^0(K(i))=0 \Leftrightarrow rr_i\le r_{d+i}.$$
\end{lem}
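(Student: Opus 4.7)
The plan is to compute $h^0(K(i))$ from the cohomology of the defining sequence and then to analyze the resulting multiplication map via the Emsalem--Iarrobino apolarity (Theorem \ref{EI}).

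First, twisting the short exact sequence defining $K$ by $\mathcal{O}_{\mathbb{P}^n}(i)$ and using $H^j(\mathcal{O}_{\mathbb{P}^n}(i))=0$ for all $j>0$ when $i\ge 0$, the long exact cohomology sequence collapses to
$$0\longrightarrow H^0(K(i))\longrightarrow R_i^{\oplus r}\xrightarrow{\ \mu_i\ } R_{d+i}\longrightarrow H^1(K(i))\longrightarrow 0,$$
where $\mu_i$ is the multiplication map $(f_1,\ldots,f_r)\mapsto\sum_k f_k L_k^d$, whose image is exactly $I_{d+i}$. Consequently
$$h^0(K(i)) \;=\; r\,r_i - r_{d+i} + \dim(R/I)_{d+i}.$$
This already yields the forward implication: if $h^0(K(i))=0$, then $\dim(R/I)_{d+i}=r_{d+i}-rr_i\ge 0$, which forces $rr_i\le r_{d+i}$.

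For the converse, I assume $rr_i\le r_{d+i}$ and apply Theorem \ref{EI} with $j=d+i\ge \max d_k = d$ to obtain
$$\dim(R/I)_{d+i}\;=\;\dim\bigcap_{k=1}^{r}\bigl(\mathcal{I}_{P_k}^{\,i+1}\bigr)_{d+i},$$
where $P_k=L_k^{\vee}$ are $r$ general points of $\mathbb{P}^n$. The right-hand side is the dimension of the linear system of degree-$(d+i)$ hypersurfaces with multiplicity at least $i+1$ at each $P_k$. Each such fat point imposes $r_i=\binom{n+i}{n}$ linear conditions on $R_{d+i}$, so the expected dimension of the system is $\max(0,\,r_{d+i}-rr_i)$, and the desired conclusion $h^0(K(i))=0$ is equivalent to the assertion that the $r$ generic fat points impose \emph{independent} conditions on $R_{d+i}$.

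The crux of the proof is therefore this independence of conditions. In full generality this is a Segre--Harbourne--Gimigliano--Hirschowitz-type problem for fat points, open for arbitrary $n$ and $i$; however, in the subregular numerical range $rr_i\le r_{d+i}$ imposed by the hypothesis, and with $r<r_d$ ensuring that $I$ is genuinely proper in degree $d$, I would conclude by a semicontinuity and specialization argument. Concretely, one degenerates the $P_k$'s to a configuration where the conditions are transparently independent (e.g.\ $L_k=x_{k-1}$ when $r\le n+1$, producing a monomial ideal with an explicit Hilbert series; an inductive specialization in $r$ handling the remaining cases), and then uses upper semicontinuity of $\dim(R/I)_{d+i}$ in families to transfer the equality to the generic point. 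Inserting $\dim(R/I)_{d+i}=r_{d+i}-rr_i$ into the formula from the first step gives $h^0(K(i))=0$, completing the equivalence.
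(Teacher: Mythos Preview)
The paper does not contain a proof of this lemma; it is merely quoted from \cite[Lemma 5.2]{DIV}, so there is no argument here to compare yours against.

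On the merits of your proposal: the cohomology setup is correct, and the forward implication follows immediately from $h^0(K(i))=rr_i-\dim I_{d+i}\ge 0$. The reduction of the converse, via Theorem~\ref{EI}, to the statement that $r$ general fat points of multiplicity $i{+}1$ impose independent conditions on $R_{d+i}$ whenever $rr_i\le r_{d+i}$, is also correct.

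The gap is precisely where you yourself flag it. You acknowledge that this independence statement is SHGH-type and open in general, then propose to dispatch it ``by a semicontinuity and specialization argument'' with ``an inductive specialization in $r$ handling the remaining cases''. That sentence is the entire content of the lemma, and it is not routine: producing a degeneration on which the conditions are visibly independent is exactly the hard part of fat-point interpolation, and no uniform inductive step in $r$ is evident for arbitrary $n,d,i$. The monomial specialization you mention only covers $r\le n+1$. As written, the converse is asserted rather than proved; to complete it you must either invoke an existing theorem on Hilbert functions of ideals of powers of general linear forms (equivalently, on general uniform fat points) that covers this numerical range, or supply such an argument explicitly.
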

 Here we generalize these results in the case in which $H^0(K(i))\neq 0$ and $H^0(K(i-k))= 0$. This generalization is interesting as if there are $0$-syzygies, i.e. $i=0$ and $H^0(K)\neq 0$ then the condition $H^0(K(-k))= 0$ is automatically verified.
\begin{thm}
\label{p1bis}
Let $i,k$ be non-negative integers such that $\mathrm{h}^0( K(i))=s$ and $H^0(K(i-k))= 0$.
Denoted by $\mathrm{H}^0(\Phi_{I,L^k})$ the induced homomorphism on global sections
$$  \begin{CD}
  \mathrm{H}^0(  \mathscr O _{L^k}(i))^r @>\mathrm{H}^0(\Phi_{I,L^k})>>
 \mathrm{H}^0(  \mathscr O _{L^k}(i+d))
\end{CD},$$
then the homomorphisms $\mathrm{H}^0(\Phi_{I,L^k})$ and  $\times L^k$ have the same cokernel, while $\ker (\times L^k)\cong \dfrac{\ker \mathrm{H}^0(\Phi_{I,L^k})}{\mathrm{H}^0( K(i))}$.
\end{thm}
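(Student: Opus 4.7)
The idea is to realize both claims as simultaneous outputs of the snake lemma applied to a single commutative diagram that encodes multiplication by $L^k$ in all the relevant degrees.

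First I would build the diagram. Twisting the short exact sequence $0\to\mathscr{O}_{\mathbb{P}^n}(-k)\xrightarrow{\times L^k}\mathscr{O}_{\mathbb{P}^n}\to\mathscr{O}_{L^k}\to 0$ by $\mathscr{O}(j)$ and taking global sections produces the short exact sequence
$$0\to R_{j-k}\xrightarrow{\times L^k} R_j\to H^0(\mathscr{O}_{L^k}(j))\to 0,$$
provided $H^1(\mathbb{P}^n,\mathscr{O}(j-k))=0$, which holds in the two degrees $j=i$ and $j=d+i$ that enter the argument. Combining these for $j=i$ (in $r$ copies) and $j=d+i$ yields the commutative diagram with exact rows
$$
\begin{array}{ccccc}
R_{i-k}^{r} & \xrightarrow{\times L^k} & R_i^{r} & \longrightarrow & H^0(\mathscr{O}_{L^k}(i))^{r} \\
\downarrow \Phi_I & & \downarrow \Phi_I & & \downarrow H^0(\Phi_{I,L^k}) \\
R_{d+i-k} & \xrightarrow{\times L^k} & R_{d+i} & \longrightarrow & H^0(\mathscr{O}_{L^k}(d+i))
\end{array}
$$
(flanked by zeros on both sides), the right-hand vertical being the restriction of $\Phi_I$ to the subscheme $L^k$.

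Next I would read off the kernels and cokernels column by column. From the defining sequence of $K$, the kernels of the two left $\Phi_I$ columns are exactly $H^0(K(i-k))$ and $H^0(K(i))$, while the image of $\Phi_I:R_j^{r}\to R_{d+j}$ equals $I_{d+j}$, so the corresponding cokernels are $A_{d+i-k}$ and $A_{d+i}$. The right column has kernel and cokernel of $H^0(\Phi_{I,L^k})$ tautologically. The snake lemma then gives the six-term exact sequence
$$0\to H^0(K(i-k))\to H^0(K(i))\to \ker H^0(\Phi_{I,L^k})\xrightarrow{\partial} A_{d+i-k}\xrightarrow{\times L^k} A_{d+i}\to \mathrm{coker}\,H^0(\Phi_{I,L^k})\to 0,$$
in which, by the functoriality of the snake construction, the arrow between the cokernels of the left two columns is precisely multiplication by $L^k$ on $A$. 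The hypothesis $H^0(K(i-k))=0$ collapses the left end, so the tail gives $\mathrm{coker}(\times L^k)\cong\mathrm{coker}\,H^0(\Phi_{I,L^k})$, while exactness at $\ker H^0(\Phi_{I,L^k})$ and at $A_{d+i-k}$ repackages as $0\to H^0(K(i))\to \ker H^0(\Phi_{I,L^k})\to \ker(\times L^k)\to 0$, whence $\ker(\times L^k)\cong\ker H^0(\Phi_{I,L^k})/H^0(K(i))$.

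No step is a serious obstacle: the statement is essentially one invocation of the snake lemma. The only routine verifications are the two $H^1$-vanishings needed to obtain right-exactness of the horizontal rows (standard on $\mathbb{P}^n$), the fact that $\Phi_I$ descends through $\times L^k$ to the indicated map $H^0(\Phi_{I,L^k})$ (automatic from $R$-linearity), and the diagram chase identifying the induced arrow $A_{d+i-k}\to A_{d+i}$ with multiplication by $L^k$; the hypothesis $H^0(K(i-k))=0$ plays the single crucial role of eliminating the contribution of degree-$(i-k)$ syzygies, without which the kernel comparison would acquire an extra term.
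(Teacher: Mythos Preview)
Your argument is correct. It reaches the same six-term exact sequence the paper is after, but by a more direct path: you apply the snake lemma once to the diagram of graded pieces of $R$, whereas the paper first passes through the sheaf cohomology of $K\otimes\mathscr O_{L^k}(i)$. Concretely, the paper identifies $\ker H^0(\Phi_{I,L^k})$ with $H^0(K\otimes\mathscr O_{L^k}(i))$ via the restricted syzygy sequence, then plugs this into the long exact sequence of $0\to K(i-k)\to K(i)\to K\otimes\mathscr O_{L^k}(i)\to 0$; matching the cokernels then forces them to compare the tails $H^1(K\otimes\mathscr O_{L^k}(i))\to H^2(K(i-k))$ and $H^1(K\otimes\mathscr O_{L^k}(i))\to H^1(\mathscr O_{L^k}(i))^r$, which leads to the separate treatment of $n>2$ and $n=2$ with explicit dimension counts. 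Your route bypasses all of that: the snake sequence terminates cleanly at $\mathrm{coker}\,H^0(\Phi_{I,L^k})$ with no higher cohomology in sight, so no case distinction is needed and the only vanishing you invoke is $H^1(\mathbb P^n,\mathscr O(m))=0$, automatic for $n\ge 2$. The trade-off is that the paper's detour makes the role of $K\otimes\mathscr O_{L^k}$ visible, which is convenient for the geometric reinterpretations used later (e.g.\ relating $\hh^0(K\otimes\mathscr O_{L^k}(i))$ to osculating spaces), whereas your version is purely module-theoretic. For the statement as written, your approach is the cleaner one.
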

\begin{proof} The proof exploits the same ideas as in \cite[Theorem 4.1]{DIV}.
Let us consider the canonical exact sequence
$$  \begin{CD}
 0@>>> K(i-k) @> \times L^k>> K(i) @>>>
 K\otimes \sO_{L^k}(i) @>>> 0.
\end{CD}$$
As $H^0(K(i-k))= 0$ and $A_{d+i}=\mathrm{H}^1(K(i))$ for any $i\in \Z$ (\cite[Proposition 2.1]{BK}), we obtain the long exact sequence of cohomology
$$
0 \rightarrow \mathrm{H}^0( K(i))\rightarrow
  \mathrm{H}^0(K\otimes \mathscr O _{L^k}(i)) \rightarrow A_{d+ i-k} \stackrel{\times L^k}\longrightarrow  A_{d+i} \rightarrow$$$$\rightarrow
\mathrm{H}^1(K\otimes \mathscr O _{L^k}(i))\rightarrow \mathrm{H}^2(K(i-k)) \rightarrow 0.
$$
Moreover, by tensoring the exact sequence defining the bundle $K$  by $\mathscr O _{L^k}(i)$
$$  \begin{CD}
 0@>>> K @>>>  \mathscr O _{\p^{n}}^{r} @>\Phi_{I}>>
 \mathscr O _{\p^{n}}(d) @>>> 0,
\end{CD}$$
we get in cohomology
$$ \begin{array}{ll} 0\longrightarrow &
  \mathrm{H}^0(K\otimes   \mathscr O _{L^k}(i)) \longrightarrow \mathrm{H}^0(  \mathscr O _{L^k}(i))^r \stackrel{\mathrm{H}^0(\Phi_{I,L^k})}\longrightarrow
 \mathrm{H}^0(  \mathscr O _{L^k}(i+d))\longrightarrow  \\ & \mathrm{H}^1(K\otimes   \mathscr O _{L^k}(i))\longrightarrow \mathrm{H}^1(  \mathscr O _{L^k}(i))^r \longrightarrow
 \mathrm{H}^1(  \mathscr O _{L^k}(i+d))
\longrightarrow 0.\end{array}$$
\\ Moreover $\mathrm{H}^2(K(i-k))=0=\mathrm{H}^1(\mathscr O _{L^k}(i))=0$ when $n>2$; while when $n=2$ we have $\mathrm{H}^2(K(i-k))=\mathbb K^t$ with $t= rr_{k-i-3} - r_{k-i-d-3}$ and
 $\mathrm{h}^1(\mathscr O _{L^k}(i)) =\mathrm{h}^2(\mathscr O _{\p^2}(i-k))=r_{k-i-3}$,  $\mathrm{h}^1(\mathscr O _{L^k}(i+d)) =\mathrm{h}^2(\mathscr O _{\p^2}(i+d-k))=r_{k-i-d-3}$;
 so the cokernel of both maps $\mathrm{H}^0(\Phi_{I,L^k}) $ and $\times L^k$ are the same and between the kernels there is the sought isomorphism.
\end{proof}
Following Remark \ref{remark}, it is natural to generalize the integers $N(r,i,k,d), N^+,N^-$ in case of syzygies.
\begin{defi}\label{def_Ns}
Let $h^0(K(i))=s$, we define:\begin{itemize} \item $N_s=N(r,i,k,d,s):=r(r_i-r_{i-k})- (r_{d+i}-r_{d+i-k})-s$; \item $N^{+}_s:=\mathrm{max}(0,N_s)$; \item $N^{-}_s:=\mathrm{max}(0,-N_s)).$\end{itemize}
\end{defi}
\begin{rem} Note that when $h^0(K(i-k))=0$ we have $N_s=H_{R/I}(d+i-k)-H_{R/I}(d+i)$.
\end{rem}
Now we give the main theorem of this note, that generalizes \cite[Theorem 5.1]{DIV}.
\begin{thm}
\label{thgen}
Let $I=(F_1, \ldots, F_r) \subset R$ be an  artinian ideal generated by $r$ homogeneous polynomials  of degree $d$.
Let $i,k,\delta$ be non negative integers such that there is no syzygy  of degree $i-k$  among the $F_j$'s.
The following conditions are equivalent:
\begin{enumerate}
\item \label{item_SLP}The ideal $I$ fails the SLP at the range  $k$ in degree $d+ i-k$ by $\delta$;
\item\label{item_ker} $\dim \ker (\times L^k)=N^+_s+\delta>N^+_s$;
\item\label{item_coker} $\dim \mathrm{coker} (\times L^k)=N^-_s+\delta>N^-_s$;
\item \label{item_equation}The $n$-dimensional variety $\pi_{I_{d+i}}(v_{d+i}(\p^n))$ satisfies
$\delta \ge 1$ non trivial Laplace equations of order $d+i-k$ and no Laplace equation of smaller order;
\item \label{item_hypersurf} For any  $L\in R_1$, $\mathrm{dim}_{\C}((I_{d+i}^{\perp})^*\cap \mathrm{H}^0(\cI_{L^{\vee}}^{d+i-k+1}(d+i))= N^{-}_s+\delta$, with $\delta \ge  1$.
\end{enumerate}
\end{thm}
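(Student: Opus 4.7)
The plan is to prove Theorem \ref{thgen} by emulating the proof of \cite[Theorem 5.1]{DIV} and using Theorem \ref{p1bis} as the bridge that carries the arguments over to the presence of $s = \mathrm{h}^0(K(i))$ syzygies in degree $i$. Since Theorem \ref{p1bis} shows that $\mathrm{H}^0(\Phi_{I,L^k})$ and $\times L^k$ share the same cokernel and have kernels that differ by exactly $\mathrm{H}^0(K(i))$, the cokernel-based arguments (Laplace equations, singular hypersurfaces) in \cite{DIV} transfer verbatim, whereas the kernel-based ones need the $-s$ correction recorded in $N_s$.

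First I would establish $(1) \Leftrightarrow (2) \Leftrightarrow (3)$ by pure linear algebra. The hypothesis of no syzygy in degree $i-k$ gives $\dim I_{d+i-k} = r\,r_{i-k}$, hence $H_{R/I}(d+i-k) = r_{d+i-k} - r\,r_{i-k}$; combined with $\dim I_{d+i} = r\,r_i - s$, this yields $N_s = H_{R/I}(d+i-k) - H_{R/I}(d+i)$, in agreement with the remark after Definition \ref{def_Ns}. The condition $\rk(\times L^k) = \min(H_{R/I}(d+i-k),H_{R/I}(d+i)) - \delta$ defining SLP failure by $\delta$ then becomes, via rank--nullity, the simultaneous statement $\dim\ker(\times L^k) = N^+_s + \delta$ and $\dim\mathrm{coker}(\times L^k) = N^-_s + \delta$.

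Next I would prove $(3) \Leftrightarrow (5)$. By Theorem \ref{p1bis} the cokernel of $\times L^k$ coincides with the cokernel of $\mathrm{H}^0(\Phi_{I,L^k})$, so the apolarity argument used for item (iv) of Theorem \ref{th1bis} applies unchanged: the transpose of $\times L^k$ under the contraction pairing $R_{d+i}^* \times R_{d+i} \to \C$ identifies the dual of $\mathrm{coker}(\times L^k)$ with the subspace of $I_{d+i}^\perp$ annihilated by $L^k(\partial)$, and, using the identification $R_{d+i} = I_{d+i} \oplus (I_{d+i}^\perp)^*$ made in the paper, this corresponds to $(I_{d+i}^\perp)^* \cap \mathrm{H}^0(\cI_{L^\vee}^{d+i-k+1}(d+i))$, since a form in $R_{d+i}$ is killed by $L^k(\partial)$ exactly when it vanishes at $L^\vee$ with multiplicity at least $d+i-k+1$.

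Finally, for $(1) \Leftrightarrow (4)$ I would use the geometric interpretation from \cite{DIV}: the $(d+i-k)$-th osculating space to the Veronese $v_{d+i}(\p^n)$ at $[L^{d+i}]$ is $\p(L^k R_{d+i-k})$, and after $\pi_{I_{d+i}}$ its dimension becomes $\rk(\times L^k) - 1$; comparing with the expected dimension $\min(r_{d+i-k}, H_{R/I}(d+i)) - 1$ and separating off the $N^+_s$ trivial Laplace equations recorded in Remark \ref{remark}, $X = \pi_{I_{d+i}}(v_{d+i}(\p^n))$ satisfies exactly $\delta$ non-trivial Laplace equations of order $d+i-k$ precisely when $\dim\mathrm{coker}(\times L^k) = N^-_s + \delta$, while the extra clause ``no Laplace equation of smaller order'' forces $\min(r_{d+i-k}, H_{R/I}(d+i))$ to coincide with the $\min(H_{R/I}(d+i-k), H_{R/I}(d+i))$ appearing in the SLP definition. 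The step I expect to be most delicate is precisely this last one: matching the count of trivial Laplace equations against the $-s$ appearing in $N_s$, and using the clause on lower order Laplace equations to exclude the pathological case in which the two minima above disagree.
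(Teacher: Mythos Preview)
Your proposal is correct and follows essentially the same route as the paper. Both arguments hinge on Theorem~\ref{p1bis} to transfer the cokernel computations from $\mathrm{H}^0(\Phi_{I,L^k})$ to $\times L^k$ unchanged, and to shift the kernel by $s=\mathrm{h}^0(K(i))$; both then read off $(1)\Leftrightarrow(2)\Leftrightarrow(3)$ from $N_s=H_{R/I}(d+i-k)-H_{R/I}(d+i)$ and rank--nullity, and recycle the osculating-space/apolarity interpretation of \cite[Theorem 5.1]{DIV} for the remaining items.

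The only visible differences are organizational. The paper chains $(2)\Leftrightarrow(4)\Leftrightarrow(5)$, computing the intersection $\mathbb{P}(I_{d+i}^*)\cap T^{d+i-k}_{[L^{d+i}]}v_{d+i}(\mathbb{P}^n)$ via the kernel and then dualizing to hyperplanes; you instead go $(3)\Leftrightarrow(5)$ directly through the contraction pairing, which is a slightly cleaner way to reach the same identification $\mathrm{coker}(\times L^k)^*\cong (I_{d+i}^\perp)^*\cap\mathrm{H}^0(\cI_{L^\vee}^{d+i-k+1}(d+i))$. You are also more careful than the paper about the clause ``no Laplace equation of smaller order'' in item~(4): the paper's proof does not explicitly invoke it, whereas you correctly observe that it is what forces the expected osculating dimension $\min(r_{d+i-k},H_{R/I}(d+i))-1$ to line up with $\min(H_{R/I}(d+i-k),H_{R/I}(d+i))-1$, so that the count of non-trivial equations beyond the $N^+_s+\dim I_{d+i-k}$ trivial ones matches the $\delta$ in the SLP definition.
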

\begin{proof}
By Theorem \ref{p1}, the equivalence between \eqref{item_ker} and \eqref{item_coker} is an easy calculation.
Moreover, it is obvious that \eqref{item_ker} and \eqref{item_coker} imply \eqref{item_SLP}.
\\ Let us assume that $I$ fails the SLP at the range  $k$ in degree $d+ i-k$. Let $D=\dim \ker (\times L^k)$ and suppose that $0<D\leq N^+_s$. Denoting $h^0=\dim \ker \mathrm{H}^0(\Phi_{I,L^k})$, by Theorem \ref{p1}, $D=h^0-s$ and $\dim \mathrm{coker} (\times L^k)=D-N_s$. Now, if $N_s\geq 0$, then $N^+_s=N_s=N-s$ and $0<D=h^0-s\leq N-s$ so $h^0\leq N$. On the other hand, $0<\dim \mathrm{coker} \times L^k=D-N_s=h^0-s-N+s=h^0-N$ so $h^0>N$ that is a contradiction.
If $N_s<0$, then $N^+_s=0$ and $0<D\leq 0$ is a contradiction. So \eqref{item_SLP} $\Leftrightarrow$ \eqref{item_ker} $\Leftrightarrow$ \eqref{item_coker}.
\\\eqref{item_ker} $\Leftrightarrow$ \eqref{item_equation}.
 The dimension of the kernel of the map $\times L^k$ i.e. the dimension of $\mathrm{H}^0(K\otimes \mathscr O _{L^k}(i))-s$, written in a geometric way, is
$$N^+_s+\delta=\mathrm{dim}[\p(I_{d+i}^*) \cap T_{[L^{d+i}]}^{d+i-k}v_{d+i}(\p^n)]+1,\,\,\,\,\, \delta \ge 0$$
where the projective dimension is $-1$ if the intersection is empty.  The number $\delta$ is the number of (non trivial) Laplace equations. Indeed,
 the dimension of the $(d+i-k)$-th osculating space to $\pi_{I_{d+i}}(v_{d+i}(\p^n))$ is $r_{d+i-k} -N^{+}_s -\delta$, since
 the $(d+i-k)$-th osculating space to $v_{d+i}(\p^n)$  meets the center of projection along a $\p^{N^{+}_s+\delta-1}$.  In other words, the $n$-dimensional variety  $\pi_{I_{d+i}}(v_{d+i}(\p^n))$
 satisfies $\delta$ Laplace equations.
\\ \eqref{item_equation}$\Leftrightarrow$ \eqref{item_hypersurf}.
The image  by $\pi_{I_{d+i}}$ of the  $(d+i-k)$-th osculating space to the Veronese
$v_{d+i}(\p^n)$ at a general point  has codimension $\mathrm{h}^0(K\otimes \mathscr O _{L^k}(i))-N^+_s$ in
$\p(I_{d+i}^{\perp})$. The codimension corresponds to the number of hyperplanes in $\p(I_{d+i}^{\perp})$ containing the osculating space to
$\pi_{I_{d+i}}(v_{d+i}(\p^n))$. These hyperplanes are images by $\pi_{I_{d+i}}$ of hyperplanes in $\p(R_{d+i}^*)$ containing
$\p(I_{d+i}^*)$ and the $(d+i-k)$-th osculating space to
$v_{d+i}(\p^n)$ at the point $[L^{d+i}]$.
In the dual setting it means that these hyperplanes define
forms  of degree $d+i$ in $(I_{d+i}^{\perp})^*$ with multiplicity  $(d+i-k+1)$ at $[L^{\vee}]$.
\\
To summarize, the number of non trivial Laplace equations is $\hh^0(K\otimes \mathscr O _{L^k})-s-N^+_s$ and
$\mathrm{coker}(\mathrm{H}^0(\Phi_{I,L^k}))\simeq (I_{d+i}^{\perp})^*\cap \mathrm{H}^0(\cI_{L^{\vee}}^{d+i-k+1}(d+i)).$
\end{proof}

\section{Remarks on failing SLP at the range  $2$ and line arrangements on $\p^2$}
\label{arrang_sec}
In this section we explain the connection with certain line arrangements in the plane and we add some remarks on \cite[Section 7]{DIV}.
\par
A line arrangement  is a collection of distinct lines in the projective plane. Arrangements of lines, and more generally of hyperplanes, have long been an important topic of study (see \cite{Cartier} or \cite{OT} for a good introduction).
\begin{defi}
Given a line arrangement, let us denote by
$f=0$ the equation of the union of lines of this arrangement. The
vector bundle $\mathcal{D}_0$ defined as the kernel of the  jacobian map:
$$ \begin{CD}
    0 @>>> \mathcal{D}_0 @>>> \mathscr O _{\p^2}^{3} @>(\partial f)>> \mathscr O _{\p^2}(d-1)
   \end{CD}
$$
 is called \em derivation bundle\em\ (or  logarithmic bundle ) of the line arrangement  (see \cite{S}  and \cite{Sc} for an introduction to
derivation bundles).
\\
One can consider the  lines of the arrangement in $\p^{2}$ as a set of distinct points $Z$  in $\p^{2\vee}$. Then we will
denote by  $\mathcal{D}_0(Z)$ the associated derivation bundle.

The arrangement of lines is said {\it free with exponents} $(a,b)$ if its derivation bundle splits on $\p^2$ as a sum of two line bundles, more precisely if
$$ \mathcal{D}_0(Z)=\mathscr O _{\p^2}(-a)\oplus \mathscr O _{\p^2}(-b),$$
while $(a,b)$ is called the \em general splitting type\em\ if it corresponds to the splitting of $\mathcal{D}_0(Z)$ over a general line $l\subset \p^2$.
\end{defi}
The general splitting type $(a,b)$ is related to the existence of curves of degree $a+1$ passing through $Z$, having multiplicity $a$ at $l^{\vee}\in \p^{2\vee}$.
More precisely,
\begin{lem}(\cite{V_AnnTol}, \cite[Proposition 2.1]{FV})
\label{linksd} Let $Z\subset \p^{2\vee}$ be a set of $a+b+1$ distinct points with $1\le a\le b$  and $l$ be
a general line in $\p^{2}$. Then  the following conditions are equivalent:
\begin{enumerate}
 \item $\mathcal{D}_0(Z)\otimes
\mathscr O _{l}=\mathscr O _{l}(-a)\oplus \mathscr O _{l}(-b)$.
 \item  $\mathrm{h}^0((\mathcal{J}_{Z}\otimes \mathcal{J}_{l^{\vee}}^{a})(a+1))\neq
0$ and $\mathrm{h}^0((\mathcal{J}_{Z}\otimes \mathcal{J}_{l^{\vee}}^{a-1})(a))=
0.$
\end{enumerate}
\end{lem}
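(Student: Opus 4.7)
The plan is to translate the splitting-type condition into a pair of cohomological inequalities on $\p^{2}$ and then, via the incidence correspondence between $\p^{2}$ and $\p^{2\vee}$, into the statement about curves through $Z$ with an assigned singularity at $l^{\vee}$.

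First I would reduce the splitting condition to two $\mathrm{h}^{0}$-computations. The derivation bundle $\mathcal{D}_{0}(Z)$ is a rank $2$ vector bundle on $\p^{2}$ with $c_{1}=-(a+b)$, read off from the defining Jacobian sequence (since $|Z|=a+b+1$ and $d-1=a+b$). Hence by Grothendieck's theorem $\mathcal{D}_{0}(Z)\otimes\sO_{l}\cong\sO_{l}(-a')\oplus\sO_{l}(-b')$ with $a'+b'=a+b$, $a'\le b'$, and the splitting type is exactly $(-a,-b)$ iff $a$ is the smallest integer $m$ with $\mathrm{h}^{0}(\mathcal{D}_{0}(Z)\otimes\sO_{l}(m))\ne 0$. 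So the lemma reduces to the identification, for every $m\ge 0$,
$$\mathrm{h}^{0}\bigl(\mathcal{D}_{0}(Z)\otimes\sO_{l}(m)\bigr)\;=\;\mathrm{h}^{0}\bigl((\mathcal{J}_{Z}\otimes\mathcal{J}_{l^{\vee}}^{\,m})(m+1)\bigr).$$

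Next I would establish this identification through the flag incidence $F=\{(p,H)\in\p^{2}\times\p^{2\vee}\,:\,p\in H\}$ with projections $p_{1},p_{2}$. A Schwarzenberger-type computation realises $\mathcal{D}_{0}(Z)$ as $R^{1}p_{1\ast}$ of an appropriate twist of $p_{2}^{\ast}\mathcal{J}_{Z}$. For a general line $l\subset\p^{2}$, the preimage $p_{1}^{-1}(l)$ is naturally the blowup of $\p^{2\vee}$ at $l^{\vee}$: each fiber $p_{1}^{-1}(p)$ for $p\in l$ is the line $p^{\vee}\subset\p^{2\vee}$ of lines through $p$, and all these lines share the common point $l^{\vee}$, giving an exceptional divisor $E$ equal to the fiber of $p_{2}$ over $l^{\vee}$. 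Base change together with the projection formula then identifies sections of $\mathcal{D}_{0}(Z)\otimes\sO_{l}(m)$ with sections of $\mathcal{J}_{Z}(m+1)$ on this blowup vanishing to order $m$ along $E$, that is, with plane curves of degree $m+1$ in $\p^{2\vee}$ containing $Z$ and having multiplicity at least $m$ at $l^{\vee}$. Specialising to $m=a$ and $m=a-1$ yields the desired equivalence.

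The main obstacle is Step 2: verifying the direct-image identification and carrying out base change cleanly, in particular checking that for general $l$ the higher direct images behave well enough that the two $\mathrm{h}^{0}$'s really coincide (no stray $R^{2}$ terms or failures of cohomology-and-base-change). An alternative route avoids the incidence entirely by restricting the Jacobian sequence $0\to\mathcal{D}_{0}(Z)\to\sO_{\p^{2}}^{3}\to\sO_{\p^{2}}(d-1)\to 0$ to $l$ and reading sections of $\mathcal{D}_{0}(Z)\otimes\sO_{l}(m)$ as triples $(a_{0},a_{1},a_{2})$ of forms on $l$ with $a_{0}\partial_{0}f+a_{1}\partial_{1}f+a_{2}\partial_{2}f\equiv 0$ along $l$; apolarity then translates such triples into exactly the curves through $Z$ with multiplicity at $l^{\vee}$. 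This route trades the geometry of the incidence for explicit apolar bookkeeping, but the core content is the same.
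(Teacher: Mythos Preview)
The paper does not prove this lemma: it is quoted as an external result from \cite{V_AnnTol} and \cite[Proposition~2.1]{FV}, and no argument is supplied in the text. So there is no ``paper's own proof'' to compare against.

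That said, your outline is essentially the approach of the cited references. The flag incidence $F\subset\p^{2}\times\p^{2\vee}$ you invoke is exactly the \emph{standard construction} of Faenzi--Vall\`es \cite{FV}, under which $\mathcal{D}_{0}(Z)$ arises as a direct image of (a twist of) $p_{2}^{\ast}\mathcal{J}_{Z}$, and restriction to a general line $l$ corresponds to blowing up $\p^{2\vee}$ at $l^{\vee}$. Your reduction in Step~1 (minimal twist with a section determines the splitting type) is standard and correct. The concerns you flag about base change and stray higher direct images are precisely what is handled in those references; for a general $l$ (in particular $l^{\vee}\notin Z$) the required vanishing holds and the identification of $\mathrm{h}^{0}$'s goes through. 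Your alternative apolarity route is also viable and is closer in spirit to the Emsalem--Iarrobino translation used elsewhere in this paper.
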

So the general splitting type is related to the degree of suitable singular curves through $Z$ and, thanks to the result of Emsalem-Iarrobino (\cite{EI}) here stated in Theorem \ref{EI}, in
\cite[Proposition 7.2]{DIV} an equivalence between unstability of the derivation bundle and the failing of SLP at range $2$ is given. Actually, in the statement of Proposition 7.2 and its corollaries in \cite{DIV} the hypothesis on the non-existence of $0$-syzygies, that is implicitly used in the proofs, is missing. Here we give a precise statement, in a more general form with respect to the number of lines and the splitting type of the derivation bundle. In this way we determine an interval of possible degrees of generators of ideals failing SLP.
%
%
\begin{prop}\label{prop_bundle}
\label{th5}
 Let $I\subset R=\C[x,y,z]$ be an  artinian ideal generated by $2d+1+n$  polynomials $l_1^d, \ldots, l_{2d+1+n}^d$ where $n\geq 0$ and $l_i$ are distinct linear forms in $\p^2$.
Let  $Z=\{l_1^{\vee},\ldots, l_{2d+1+n}^{\vee}\}$ be the corresponding set of points in $\p^{2\vee}$. If the ideal is minimally generated in degree $d$, then the following conditions are equivalent:
\begin{enumerate}
\item The ideal $I$ fails the $\mathrm{SLP}$ at the range  $2$ in degree $d-2$.
\item The derivation bundle $\mathcal{D}_0(Z)$ is non-balanced with splitting type $(d-s,d+s+n)$, with $s\geq 1$.
\end{enumerate}
Moreover, if $n$ is even the following third condition is equivalent to the previous two:
\begin{enumerate}[3.]
\item
The derivation bundle $\mathcal{D}_0(Z)$ is unstable with splitting type $(d-s,d+s+n)$, with $s\geq 1$.
\end{enumerate}
\end{prop}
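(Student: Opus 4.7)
The plan is to invoke Theorem~\ref{th1bis} to convert the failure of SLP into the existence of a suitable singular plane curve through $Z$, and then to translate this geometric condition into the splitting type of $\mathcal{D}_0(Z)$ via Lemma~\ref{linksd}. The minimal generation hypothesis forces $\hh^0(K)=0$, so Theorem~\ref{th1bis} applies with $i=0$ and $k=2$. The direct computation
$$N=r(r_0-r_{-2})-(r_d-r_{d-2})=(2d+1+n)-(2d+1)=n,$$
gives $N^+=n$ and $N^-=0$, and the equivalence of items (1) and (4) of Theorem~\ref{th1bis} reads: $I$ fails SLP at range $2$ in degree $d-2$ if and only if, for a general $L\in R_1$,
$$\dim_{\C}\bigl((I_d^\perp)^*\cap H^0(\cI_{L^\vee}^{d-1}(d))\bigr)\geq 1.$$
Under the apolar pairing, $(I_d^\perp)^*$ sits inside $R_d$ as the space of forms apolar to $(l_1^d,\ldots,l_r^d)$; by Theorem~\ref{EI} (with $j=d$, $d_i=d$) this space is identified with $H^0(\cI_Z(d))$, so the intersection above is $H^0(\cI_Z\otimes\cI_{L^\vee}^{d-1}(d))$. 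Thus failure of SLP at range $2$ in degree $d-2$ becomes the existence of a plane curve of degree $d$ through $Z$ with multiplicity at least $d-1$ at the general point $L^\vee\in\p^{2\vee}$.

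Writing the general splitting type of $\mathcal{D}_0(Z)$ as $(a,b)=(d-s,d+s+n)$ with $a\le b$ (possible because $a+b+1=|Z|=2d+1+n$), the heart of the argument is the equivalence
$$\hh^0(\cI_Z\otimes\cI_{L^\vee}^{d-1}(d))\geq 1\Longleftrightarrow s\geq 1.$$
For $(\Leftarrow)$: if $s\geq 1$ then $a=d-s\le d-1$, so Lemma~\ref{linksd} supplies a nonzero $F\in H^0(\cI_Z\otimes\cI_{L^\vee}^{d-s}(d-s+1))$; multiplying by $M^{s-1}$, for any line $M$ through $L^\vee$, produces a nonzero element of $H^0(\cI_Z\otimes\cI_{L^\vee}^{d-1}(d))$. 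For $(\Rightarrow)$: if $s\leq 0$ then $a\geq d$, and Lemma~\ref{linksd} gives $\hh^0(\cI_Z\otimes\cI_{L^\vee}^{a-1}(a))=0$; multiplication by $M^{a-d}$ is an injection $H^0(\cI_Z\otimes\cI_{L^\vee}^{d-1}(d))\hookrightarrow H^0(\cI_Z\otimes\cI_{L^\vee}^{a-1}(a))$, forcing the source to vanish. This establishes $(1)\Leftrightarrow(2)$.

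Finally, for $n$ even, $s\geq 1$ forces $|a-b|=2s+n\geq 2$, so by Grauert--M\"ulich the bundle $\mathcal{D}_0(Z)$ cannot be semistable, hence is unstable; this yields $(2)\Rightarrow(3)$, while $(3)\Rightarrow(2)$ is immediate from the splitting-type clause in (3). The most delicate step of the whole argument is the apolar identification of $(I_d^\perp)^*$ with $H^0(\cI_Z(d))$ as subspaces of $R_d$ (so that the intersection with $H^0(\cI_{L^\vee}^{d-1}(d))$ really is the space of singular curves of degree $d$ through $Z$). This is the apolar dictionary developed in \cite{DIV} combined with Theorem~\ref{EI}; once it is in place, the rest of the proof reduces to two short manipulations using Lemma~\ref{linksd} and the injectivity of polynomial multiplication.
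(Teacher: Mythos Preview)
Your proof is correct and follows the same route as the paper's: translate failure of SLP (via Theorem~\ref{th1bis} with $i=0$, $k=2$, using the minimal-generation hypothesis to ensure $\hh^0(K)=0$) into the existence of a degree-$d$ curve through $Z$ with a $(d-1)$-fold point at a general $L^\vee$, and then use Lemma~\ref{linksd} to convert this into the condition $a\le d-1$ on the generic splitting type. The paper's argument is deliberately terse (``it is enough to argue as in \cite[Proposition~7.2]{DIV}''); you have spelled out the steps the paper suppresses, in particular the apolar identification $(I_d^\perp)^*\cong H^0(\cI_Z(d))$ via Theorem~\ref{EI} and the two multiplication-by-$M^{s-1}$ resp.\ $M^{a-d}$ arguments that bridge between $(a,b)$-splitting and degree-$d$ singular curves. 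Your computation $N=n$, hence $N^-=0$, and your use of Grauert--M\"ulich for the $n$ even case are exactly what the paper invokes implicitly; note also that the paper writes $|b-a|=2s$, which is only literally correct for $n=0$, while your $|b-a|=2s+n$ is the correct general formula.
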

\begin{proof}
It is enough to argue as in \cite[Proposition 7.2]{DIV}.  The ideal $I$ fails the $\mathrm{SLP}$ at the range  $2$ in degree $d-2$ if and only if there exists a curve of degree $d$ through $Z$ and with multiplicity $d-1$ at a general point $P$. By Lemma \ref{linksd}, this condition is equivalent to asking that $\mathcal{D}_0(Z)$ has splitting type $(a,b)$  with $a\leq d-1$, so we can write $a=d-s$ with $s \geq 1$. As the number of points in a line arrangement is always $a+b+1$ we get $b=2d+n-a=d+s+n$ and the difference $|b-a|=2s\geq 2$, so $\mathcal{D}_0(Z)$ has to be non-balanced.
The last part comes from the  equivalence between unstability and the non-balanced condition $|b-a|\geq 2$ that holds when the first Chern class $a+b$ is even.
\end{proof}
Now, the corollary in \cite{DIV} becomes the following.
\begin{prop}\label{prop_arr}
 Let $\mathcal{A}=\{ l_1, \ldots, l_{a+b+1}\}$ be a free line arrangement with exponents $(a,b)$ such that  $a\le b $,  $b-a\ge 2$, let $Z=\{l_1^{\vee},\ldots, l_{a+b+1}^{\vee}\}$ be the corresponding set of points in $\p^{2\vee}$. For every integer $d$ such that $a+1\leq d\leq \lceil \frac{a+b}2\rceil$\footnote{By $\lceil x\rceil$ we denote the minimum integer greater or equal than $x$}, if the ideals defined below are minimally generated in degree $d$, then
  \begin{enumerate}
\item  if $a+b$ is even, $I=(l_1^d, \ldots, l_{a+b+1}^d)$ fails the SLP at the range $2$ and degree $d-2$;
\item if $a+b$ is odd, let $P$ be a point in general  position with respect to $Z$, then $I=(l_1^{d}, \ldots, l_{a+b+1}^{d},{P^\vee}^{d})$ fails the SLP at the range $2$ and degree  $d-2$.
    \end{enumerate}
\end{prop}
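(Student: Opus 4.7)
The plan is to reduce both parts to Proposition~\ref{prop_bundle} by exhibiting, in each case, the required splitting of the derivation bundle of the dual point configuration. Writing the number of generators as $2d+1+n$ with $n$ a non-negative even integer, the task is to check that the relevant $\mathcal{D}_0$ has splitting of the form $(d-s,\, d+s+n)$ with $s\ge 1$; then the minimal generation assumption delivers the failure of SLP via the equivalence (1)$\Leftrightarrow$(2) of Proposition~\ref{prop_bundle}.

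For Case~1 ($a+b$ even) I take $n = a+b-2d$, which is non-negative since $d \le (a+b)/2$ and even since both $a+b$ and $2d$ are. Freeness of $\mathcal{A}$ with exponents $(a,b)$ gives $\mathcal{D}_0(Z) = \sO(-a)\oplus \sO(-b)$, which matches $(d-s,\, d+s+n)$ with $s = d-a \ge 1$, so Proposition~\ref{prop_bundle} concludes immediately.

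For Case~2 ($a+b$ odd) I let $Z' = Z\cup\{P\}$ and $n = a+b+1-2d$, still non-negative and even, so I must show that $\mathcal{D}_0(Z')$ has splitting $(d-s,\,d+s+n)$ with $s\ge 1$. By Lemma~\ref{linksd} this is equivalent to producing, for a general line $l\subset\p^2$, a plane curve of degree $d$ through $Z'\subset\p^{2\vee}$ with multiplicity $\ge d-1$ at $l^\vee$. Freeness of $\mathcal{A}$ together with Lemma~\ref{linksd} furnishes a curve $C_l$ of degree $a+1$ through $Z$ with multiplicity $a$ at $l^\vee$. I would then multiply $C_l$ by $D_l$, a product of $d-a-1$ lines through $l^\vee$ in $\p^{2\vee}$, one of which is chosen to be the line joining $l^\vee$ and $P$. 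The product $C_l\cdot D_l$ has degree $d$, passes through $Z'$, and has multiplicity $a + (d-a-1) = d-1$ at $l^\vee$, so Proposition~\ref{prop_bundle} yields the failure of SLP at range $2$ and degree $d-2$.

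The main obstacle I foresee is the boundary value $d = a+1$, where $d-a-1 = 0$ and $D_l$ degenerates to a constant, so the product construction breaks down. In that case one needs $C_l$ itself to pass through $P$, and the natural way to ensure this is to show that the linear system $V_l := H^0(\mathcal{J}_Z\otimes \mathcal{J}_{l^\vee}^a(a+1))$ has dimension at least $2$ for a general $l$, so that the codimension-one condition of passing through the generic point $P$ still leaves a non-zero section. I expect the minimal generation hypothesis, combined with the Emsalem--Iarrobino formula (Theorem~\ref{EI}) applied to $Z'$ in degree $a+1$, to force $V_l \ge 2$; however, establishing this dimension bound requires a dedicated cohomological computation (note that for non-minimally generated cases like near-pencils one only has $V_l = 1$), and it represents the technical heart of the argument.
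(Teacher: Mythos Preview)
The paper gives no separate proof, treating the proposition as an immediate corollary of Proposition~\ref{prop_bundle}; your reduction---writing the number of generators as $2d+1+n$ and reading off the splitting of $\mathcal D_0$---is exactly the intended argument. Case~1 and Case~2 for $d\ge a+2$ are handled correctly.

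The boundary case $d=a+1$ in Case~2 that you flag is a genuine obstruction, but your proposed remedy cannot succeed. Adding a \emph{general} point $P$ to $Z$ shifts the generic splitting from $(a,b)$ to $(a+1,b)$, not to $(a,b+1)$: this is the standard ``balancing'' behaviour of elementary modifications (and is easy to check in small cases, e.g.\ passing from four to five general lines moves the splitting from $(1,2)$ to $(2,2)$). Equivalently, for a free arrangement with $a<b$ one has $\dim V_l=1$ for general $l$---for instance, in the near-pencil with $(a,b)=(1,4)$ the unique conic through $Z$ and a general $l^\vee$ is the line carrying the five collinear points together with the line $\overline{m^\vee l^\vee}$, so it misses a general $P$. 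Hence $h^0(\mathcal J_{Z'}\otimes\mathcal J_{l^\vee}^{a}(a+1))=0$, the first exponent of $\mathcal D_0(Z')$ equals $a+1=d$, and by the equivalence in Proposition~\ref{prop_bundle} the ideal $I$ actually \emph{has} SLP at range $2$ in degree $d-2$ whenever it is minimally generated. Your hoped-for inequality $\dim V_l\ge 2$ therefore fails in general, and the Emsalem--Iarrobino computation you sketch cannot rescue it. The upshot is that either the minimal-generation hypothesis is vacuous at $d=a+1$ in Case~2 (as indeed happens in the near-pencil example, where five collinear dual points force dependencies among the $(a+1)$-th powers), or the stated lower bound in Case~2 should be $a+2$; in either reading, the gap cannot be filled along the lines you propose.
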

%
%
Now, we have to explain how we recognized that in \cite{DIV} the hypothesis on the minimality of generators has been forgotten. We thank the authors of \cite{CHMN16} for the following example.
\begin{ex}\label{CHMN_ex}
Let $\mathcal A=xyz(x+z)(x+2z)\prod_{j=1}^{12}(y+jz)$. It is a free arrangement of splitting type $(3,13)$. The derivation bundle is unstable because it is non-balanced and the first Chern class is even, but the ideal $I=(x^8,y^8,z^8,(x+z)^8,(x+2z)^8,(y+jz)^8|1\leq j\leq 12)$ has the SLP at range $2$.
\end{ex}
\begin{figure}[h!]
    \centering
    \includegraphics[height=4cm]{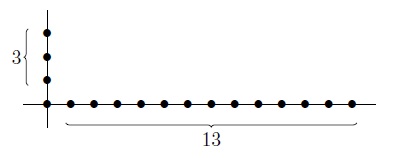}
    \caption{Dual points of the lines in Example \ref{CHMN_ex}}
  \end{figure}
 Actually, by investigating on the geometric meaning of $0$-syzygies in artinian ideals generated by powers of linear forms, we recognized that they are equivalent to the exisistence of a suitable number of aligned points in $Z$. So examples analogous to Example \ref{CHMN_ex} are the only possibility in order to get $0$-syzygies. In order to state precisely and prove this result,  we recall the following one by Ellia and Peskine \cite[Proposition pag. 112]{EP} related to the numerical character of $Z$.
\begin{defi}\label{defi}Let $S=\mathbb K[x_0,x_1]$ and $Z$ be a $0-$dimensional scheme in the projective plane $\mathbb P^2$. The \em numerical character \em\ of $Z$ is the sequence $(n_0,\ldots,n_{s-1})$ with $n_0\geq \cdots \geq n_{s-1}$ such that
$$0\rightarrow \oplus_{i=0}^{s-1} S(-n_i) \rightarrow \oplus_{i=0}^{s-1} S(-i) \rightarrow \frac{\mathbb K[x_0,x_1,x_2]}{I_Z}\rightarrow 0$$
is a minimal resolution and
\begin{enumerate}
\item $s$ is the minimal degree of a curve containing $Z$;
    \item $n_i \geq s$, for each $i=0,\ldots,s-1$;
        \item\label{huno} Defined $(a)_+= \max\{a,0\}$, $h^1(\mathscr I_Z(n))=\deg(Z)-H_Z(n)=\sum_{0}^{s-1}(n_i-n-1)_+-\sum_{i=0}^{s-1}(i-n-1)_+$, in particular $\deg(Z) = \sum_{i=0}^{s-1}(n_i-i)$.
\end{enumerate}
\end{defi}
\begin{lem}\label{lemma}
Let $Z\subset \mathbb P^2$ a set of points with numerical character $(n_i)_{i=0,\ldots,s-1}$. If $t$ is an integer such that $n_{t-1}>n_t+1$, then there exists a curve $C$ of degree $t$ such that the points of $Z$ contained in $C$ form a set with numerical character $(n_0,\ldots,n_{t-1})$.
\end{lem}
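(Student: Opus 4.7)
My plan follows the classical Ellia--Peskine strategy, which hinges on the interplay between the numerical character of $Z$, its Hilbert function, and the postulation of curves of degree $t$ through $Z$.

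First, I would translate the gap hypothesis $n_{t-1}>n_t+1$ into a statement about $h^0(\mathcal I_Z(t))$. Using item \eqref{huno} of Definition~\ref{defi}, the value $h^1(\mathcal I_Z(t-1))-h^1(\mathcal I_Z(t))$ can be read off directly from the character, and the strict inequality $n_{t-1}>n_t+1$ forces this difference --- equivalently, the postulation jump in degree $t$ --- to be strictly larger than the contribution coming from the tail $n_t,\ldots,n_{s-1}$ alone. Concretely, $(I_Z)_t$ contains the equation of a curve $C$ of degree $t$ which is \emph{new} at degree $t$, in the sense that it is not a multiple of forms of lower degree already lying in $I_Z$.

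Second, I would take such a generic $C\in |\mathcal I_Z(t)|$ and perform the residuation of $Z$ with respect to $C$. Setting $Z_1 := Z\cap C$ scheme-theoretically and $Z_2 := Z \setminus Z_1$, the residual exact sequence
$$0 \longrightarrow \mathcal I_{Z_2}(n-t) \stackrel{\cdot c}{\longrightarrow} \mathcal I_Z(n) \longrightarrow \mathcal I_{Z_1,C}(n) \longrightarrow 0,$$
where $c$ is a defining equation of $C$, transfers information on the minimal free resolution of $I_Z$ to that of $I_{Z_1}$ on $C$ and of $I_{Z_2}$ on $\mathbb P^2$. The genericity of $C$, together with the gap condition, ensures that $Z_2 \cap C = \emptyset$, so the decomposition $Z = Z_1 \sqcup Z_2$ is clean.

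Third, I would compute the numerical character of $Z_1$ by extracting it from its minimal free resolution. Since $Z_1 \subset C$ with $\deg C = t$, the character of $Z_1$ has length at most $t$; on the other hand, using the long exact sequence in cohomology associated to the residual sequence to compute $H_{Z_1}(n)$ in each degree, and matching with $H_Z(n)$ in the range where the ``top'' information of $Z$ is concentrated, one reads off that the character of $Z_1$ equals $(n_0,\ldots,n_{t-1})$; the degree identity $\deg Z_1 + \deg Z_2 = \sum_{i=0}^{s-1}(n_i-i)$ provides a sanity check. The main obstacle I expect is the second step: a careful Bertini/genericity argument that a general $C$ can be chosen so as to avoid the points of $Z_2$. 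It is precisely the strict inequality $n_{t-1}>n_t+1$, as opposed to the automatic $n_{t-1}\ge n_t$, that prevents a general curve of degree $t$ through $Z$ from accidentally collecting extra points of $Z$; this is the numerical heart of the Ellia--Peskine argument and the step where the bookkeeping must be done with care.
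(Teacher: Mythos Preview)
Your first step contains a fatal error. You assert that the gap $n_{t-1}>n_t+1$ forces $(I_Z)_t$ to contain the equation of a curve $C$ of degree $t$, and you then take a generic $C\in|\mathcal I_Z(t)|$. But the character $(n_0,\ldots,n_{s-1})$ has length $s$, and for both $n_{t-1}$ and $n_t$ to be defined one needs $1\le t\le s-1$. By item~(1) of Definition~\ref{defi}, $s$ is the \emph{minimal} degree of a curve through $Z$; hence $(I_Z)_t=0$ for every such $t$, and the linear system $|\mathcal I_Z(t)|$ is empty. The curve $C$ promised by the lemma contains only a proper subset $Z_1\subsetneq Z$, not all of $Z$, so it cannot be chosen inside $I_Z$, and your residual sequence (which presupposes $C\supset Z$) never gets off the ground. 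The application in Theorem~\ref{th_aligned} makes this vivid: there $t=1$, and the sought curve is a single line carrying $n_0$ of the $2d+1$ points, certainly not all of them.

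The paper itself does not give a proof; it simply quotes the statement from Ellia--Peskine \cite[Proposition p.~112]{EP}. Their construction of $C$ is of a different nature from what you attempt: working with the presentation of $R/I_Z$ as a module over $S=\C[x_0,x_1]$ (for a generic choice of coordinates, as in Definition~\ref{defi}), the strict gap $n_{t-1}>n_t+1$ produces a block structure in the presentation matrix $\bigoplus S(-n_i)\to\bigoplus S(-i)$ that separates the indices $i<t$ from the indices $i\ge t$. From this splitting one extracts both the curve $C$ of degree $t$ and the subset $Z_1=Z\cap C$ with character $(n_0,\ldots,n_{t-1})$. The point is that $C$ comes from decomposing the $S$-resolution, not from $H^0(\mathcal I_Z(t))$; if you want to reconstruct the argument, that is where to start.
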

 \begin{thm}\label{th_aligned}
Let $l_1,\ldots,l_{2d+1}$ be linear forms in $\mathbb K[x_0,x_1,x_2]$. Then $$\dim_\mathbb K (l_1^d,\ldots,l_{2d+1}^d)_d<2d+1$$ if and only if in the set $Z=\{l_1^\vee,\ldots,l_{2d+1}^\vee\}\subset {\p^2}^*$ there are at least $d+2$ aligned points .
\end{thm}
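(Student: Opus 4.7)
The plan is to convert the algebraic condition $\dim I_d<2d+1$ into a cohomological condition on the reduced scheme $Z$ via Theorem~\ref{EI}, and then apply the Ellia-Peskine numerical character (Lemma~\ref{lemma}) to extract a line containing many points of $Z$.

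First I would apply Theorem~\ref{EI} with $j=d$ and $d_i=d$ for every $i$: since $\bigcap_i I_{P_i}^{1}$ is the saturated ideal $I_Z$ of the reduced scheme $Z$, one obtains $\dim_{\mathbb K}(R/I)_d=\dim_{\mathbb K}(I_Z)_d$. Combining with the exact sequence $0\to I_Z\to R\to R/I_Z\to 0$ in degree $d$ gives $\dim I_d=\binom{d+2}{2}-\dim(I_Z)_d$, so the condition $\dim I_d<2d+1$ is equivalent to $\dim(I_Z)_d>\binom{d}{2}$; equivalently, since $|Z|=2d+1$, to $h^1(\mathcal I_Z(d))>0$.

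For the easy direction ($\Leftarrow$): if $m\geq d+2$ points of $Z$ lie on a line $\ell$, then any $f\in(I_Z)_d$ restricts to a degree $d$ form on $\ell$ vanishing at more than $d$ points, hence $\ell\mid f$; setting $W:=Z\setminus(Z\cap\ell)$ one has $|W|\leq d-1$ and $\dim(I_Z)_d=\dim(I_W)_{d-1}\geq\binom{d+1}{2}-(d-1)=\binom{d}{2}+1$, so $h^1(\mathcal I_Z(d))>0$. For the hard direction ($\Rightarrow$) I would invoke the numerical character $(n_0,\ldots,n_{s-1})$ of $Z$. From $2d+1=\sum_i(n_i-i)\geq\binom{s+1}{2}$ one deduces $s\leq d+1$, so $(i-d-1)_+=0$ for every $0\leq i\leq s-1$ and the formula in Definition~\ref{defi} reduces to $h^1(\mathcal I_Z(d))=\sum_{i=0}^{s-1}(n_i-d-1)_+$; positivity forces $n_0\geq d+2$. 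If $s=1$ all $2d+1$ points are aligned and we are done. Otherwise I would apply Lemma~\ref{lemma} with $t=1$ to obtain a line containing a subset of $Z$ with numerical character $(n_0)$, i.e.\ exactly $n_0\geq d+2$ aligned points of $Z$.

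The main obstacle is checking the jump hypothesis $n_0\geq n_1+2$ of Lemma~\ref{lemma}. Supposing by contradiction $n_0=n_1+c$ with $c\in\{0,1\}$, and using $\sum_i n_i=2d+1+\binom{s}{2}$ together with $n_j\geq s$ for $j\geq 2$, a direct computation based on $\binom{s}{2}-s(s-2)=s(3-s)/2$ yields
\[
n_0\leq d+\frac{4+s(3-s)}{4}.
\]
Since $s(3-s)\leq 2$ for every integer $s\geq 2$, this gives $n_0\leq d+\tfrac{3}{2}<d+2$, contradicting $n_0\geq d+2$. Hence $n_0\geq n_1+2$, and the argument closes.
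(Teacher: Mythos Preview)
Your proof is correct and follows essentially the same route as the paper: convert via Theorem~\ref{EI} to $h^1(\mathcal I_Z(d))>0$, read off $n_0\geq d+2$ from the numerical character, and apply Lemma~\ref{lemma} with $t=1$. Two minor differences worth noting: for the easy direction the paper argues directly via the Veronese embedding (aligned points map to a rational normal curve $C_d\subset\p^d$, so any $d+2$ of them are linearly dependent) without passing through Theorem~\ref{EI}; and for the gap hypothesis the paper gives the shorter bound $2d+1=\deg Z\geq n_0+(n_1-1)$, so $n_1\geq d+1$ together with $n_0\geq d+2$ would force $\deg Z\geq 2d+2$, immediately yielding $n_1\leq d<n_0-1$ and sparing your computation with $s(3-s)$.
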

\begin{proof}In the Veronese map $v_d$, aligned points  $l_1^\vee,\ldots,l_r^\vee\in L$ go to points $l_1^d,\ldots,l_r^d\in v_d(L)=C_d$, where $C_d \subset \mathbb P^d$ is the rational normal curve of degree $d$; so $d+1$ of them are linearly independent, while $d+2$ are dependent. So if there are $d+2$ aligned points we get $\dim_\mathbb K (l_1^d,\ldots,l_{2d+1}^d)_d<2d+1$.
 \\
 Conversely, let $\dim_\mathbb K (l_1^d,\ldots,l_{2d+1}^d)_d<2d+1$. By Emsalem-Iarrobino result (\cite{EI}, here Theorem \ref{EI}) we get
 $$
 \dim_{\mathbb K} {I_Z}_d=\dim_{\mathbb K} {\frac R {(l_1^d,\ldots,l_{2d+1}^d)}}_d.
 $$
   So the Hilbert function of $Z$ in degree $d$ is $H_Z(d)=\dim_{\mathbb K}(l_1^d,\ldots,l_{2d+1}^d)_d <2d+1$ and, by \eqref{huno} in Definition \ref{defi}, $h^1(\mathscr I_Z(d))=deg(Z)-H_Z(d)>2d+1-(2d+1)= 0$. With a simple calculation, we deduce that $n_0\geq d+2$. If $n_1\geq d+1$ we would have $2d+2>2d+1=\deg(Z)\geq n_0+n_1-1\geq d+2+d+1-1=2d+2$ and this is a contradiction. So $n_1<n_0-1$ and we can apply Lemma \ref{lemma} with $t=1$ in order to get a line containing a subset $Z'\subset Z$ consisting of $\deg Z'=n_0\geq d+2$ points.
 \end{proof}
So the hypothesis of non-existence of $0$-syzygies is equivalent to the condition that there are at most $d+1$ aligned points in $Z$. In Example \ref{CHMN_ex} there are too many aligned points, actually $14$ that are exactly $14=d+1+s$. Precisely, for $d=8$ in order to avoid linear dependence among $l_1^8,\ldots, l_r^8$, we can have at most $r=9$ aligned points in the dual set  $l_1^\vee,\ldots, l_r^\vee$. But there are $14$ aligned points so there are 5 more, corresponding to 14-9=5 independent $0$-syzygies.

\par With this interpretation of $0$-syzygies we can restate \cite[Proposition 7.2]{DIV} and its corollaries in a nicer geometric way.
\begin{prop}\label{prop_bundle2}
\label{th52}
 Let $I\subset R=\C[x,y,z]$ be an  artinian ideal generated by $\mathbf{2d+1}$  polynomials $l_1^d, \ldots, l_{2d+1}^d$ where $l_i$ are distinct linear forms in $\p^2$.
Let  $Z=\{l_1^{\vee},\ldots, l_{2d+1}^{\vee}\}$ be the corresponding set of points in $\p^{2\vee}$. If \textbf{there are no more than $d+1$ aligned points in $Z$}, then the following conditions are equivalent:
\begin{enumerate}
\item The ideal $I$ fails the $\mathrm{SLP}$ at the range  $2$ in degree $d-2$.
\item The derivation bundle $\mathcal{D}_0(Z)$ is unstable with splitting type $(d-s,d+s)$, with $s\geq 1$.
\end{enumerate}
\end{prop}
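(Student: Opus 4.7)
The plan is to reduce the statement to Proposition~\ref{prop_bundle} with $n=0$ by translating the geometric hypothesis on aligned points into the algebraic minimal generation hypothesis required there.

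First I would observe that, since $I$ is generated by $2d+1$ forms all of the same degree $d$, the ideal $I$ is minimally generated in degree $d$ if and only if the forms $l_1^d,\ldots,l_{2d+1}^d$ are linearly independent in $R_d$, i.e.\ $\dim_{\mathbb{K}} I_d = 2d+1$. This is precisely the quantity controlled by Theorem~\ref{th_aligned}: that theorem says $\dim_{\mathbb{K}}(l_1^d,\ldots,l_{2d+1}^d)_d < 2d+1$ iff $Z$ contains at least $d+2$ collinear points. Taking the contrapositive, the hypothesis ``at most $d+1$ aligned points in $Z$'' is exactly the minimal generation hypothesis appearing in Proposition~\ref{prop_bundle}.

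Next I would apply Proposition~\ref{prop_bundle} with $n=0$. Since $n=0$ is even, the first Chern class $a+b = 2d$ of $\mathcal{D}_0(Z)$ is even, and the three conditions in that proposition are all equivalent. In particular, the failure of $\mathrm{SLP}$ at range $2$ in degree $d-2$ is equivalent to the splitting type of $\mathcal{D}_0(Z)$ being $(d-s,d+s)$ with $s\geq 1$, and this in turn is equivalent to the unstability of $\mathcal{D}_0(Z)$ (because when $a+b$ is even, unstability of a rank-two bundle on $\p^2$ coincides with the non-balanced condition $|b-a|\geq 2$). This gives precisely the equivalence stated.

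Since the argument is a direct translation, there is no real obstacle. The only point requiring care is matching the indices: one must verify that minimal generation of an ideal whose generators have a single pure degree reduces to linear independence of those generators, and then invoke Theorem~\ref{th_aligned} in the correct direction. Once this bookkeeping is in place, Proposition~\ref{prop_bundle} does all the work, so no further computation is needed.
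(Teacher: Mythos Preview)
Your proposal is correct and follows exactly the approach the paper intends: the paper presents Proposition~\ref{prop_bundle2} as a direct geometric restatement of Proposition~\ref{prop_bundle} for $n=0$, with the minimal-generation hypothesis replaced via Theorem~\ref{th_aligned} by the condition on aligned points. No separate proof is written in the paper precisely because the reduction you describe is the entire argument.
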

We recall also Terao's conjecture.
\begin{defi}
Let $\mathcal A$ be a line arrangement, then the combinatorics of $\mathcal A $ is the intersection lattice with reverse order.
\end{defi}\begin{conj}(Terao)
Let $\mathcal A$ be a free arrangement and $\mathcal A'$ an arrangement with the same combinatorics as $\mathcal A$. Then $\mathcal A'$ is free, too (i.e. the freeness is a combinatorial property).
\end{conj}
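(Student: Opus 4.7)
Terao's conjecture is one of the central open problems in hyperplane arrangement theory, so I do not expect a short proof; what I can offer is a plan consistent with the framework developed in this paper. First I would specialise to the rank-three case of line arrangements, where by Proposition \ref{prop_bundle} freeness of $\mathcal A$ with exponents $(a,b)$ is equivalent, for every admissible $d$, to a prescribed splitting behaviour of the derivation bundle $\mathcal{D}_0(Z)$, with $Z\subset\p^{2\vee}$ the set of dual points. Via Lemma \ref{linksd} this splitting is read off from the existence or non-existence of plane curves through $Z$ with a fat point of prescribed multiplicity at a general point, and via Theorem \ref{EI} those conditions translate into dimensions of graded components of ideals generated by powers of linear forms at the points of $Z$. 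In short, freeness is encoded in Hilbert-function data of fat-point schemes supported on $Z$.

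Second, I would try to show that these Hilbert-function data are dictated by the combinatorics alone. The intersection lattice of $\mathcal A$ records the matroid of $Z$: it fixes $|Z|$ and, by counting intersection points of prescribed multiplicity in $\mathcal A$, the number of lines of $\p^{2\vee}$ containing any prescribed number of points of $Z$. Combining this with the numerical-character apparatus of Definition \ref{defi} and Lemma \ref{lemma}, my aim would be to show that for any $\mathcal A'$ with the same combinatorics as $\mathcal A$ the dual configuration $Z'$ has the same numerical character as $Z$, hence the same Hilbert function. If this can be done, the general splitting type of $\mathcal D_0(Z')$ coincides with that of $\mathcal D_0(Z)$, and one would then try to promote the equality of general splitting types to a genuine global splitting (which, over $\p^2$, for rank-two bundles, is controlled by the vanishing of a single $\mathrm{H}^1$).

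The main obstacle is the second step, and this is precisely why the conjecture has resisted proof for decades: Hilbert functions of fat-point schemes in $\p^2$ are famously \emph{not} determined by incidence data alone. Two sets $Z,Z'$ can have the same matroid while differing in hidden coincidences (special positions, unexpected collinearities of groups of points in $\p^{2\vee}$ not recorded by the lattice of $\mathcal A$) that shift the numerical character, and hence can potentially move the splitting type of $\mathcal D_0$ from $(a,b)$ with $|b-a|\ge 2$ to a balanced pair. Bridging this gap would require controlling such coincidences using information from the defining linear forms themselves, and I do not see a purely combinatorial route within the Lefschetz / derivation-bundle tools of this paper. A realistic outcome of the plan would therefore be a conditional theorem of the shape ``if the fat-point Hilbert functions associated to $Z$ are combinatorial invariants of $\mathcal A$, then freeness is combinatorial'', together with an enumeration of small cases (few lines, small exponents, or arrangements where the numerical character is forced for elementary reasons) in which the hypothesis can be verified directly; the unconditional statement appears to require genuinely new input.
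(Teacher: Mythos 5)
You have correctly identified that this statement is Terao's conjecture, which the paper does not prove: it is stated as a \texttt{conj} environment with no proof, and the text only records that for line arrangements it is known up to $12$ lines (by \cite{FV}) and that the authors reformulate it as Conjecture \ref{conj_terao} in terms of the SLP at range $2$, after adding the previously missing hypothesis bounding the number of aligned dual points. So there is no proof in the paper to compare your proposal against, and your refusal to claim a complete argument is the right call.

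Your sketch is consistent with the paper's framework, and your diagnosis of the obstruction is exactly the correct one: the intersection lattice fixes the matroid of $Z$ but not the Hilbert functions of the fat-point schemes that, via Theorem \ref{EI} and Lemma \ref{linksd}, control the splitting of $\mathcal{D}_0(Z)$; hence the second step of your plan cannot be carried out by the tools in this paper. One small caution about the first step: Proposition \ref{prop_bundle} and Lemma \ref{linksd} only see the \emph{general splitting type} of $\mathcal{D}_0(Z)$ (its restriction to a general line), whereas freeness is the global splitting of the bundle; passing from the former to the latter is itself nontrivial (it is not just the vanishing of one $\mathrm{H}^1$ for an arbitrary rank-two bundle, though for derivation bundles of arrangements there are criteria in this direction due to Yoshinaga and others). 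Your proposed conditional statement and verification of small cases is a reasonable description of the current state of the art, but it is not a proof, and none is expected here.
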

This conjecture for line arrangements has been proved up to $12$ lines (\cite{FV}).
 In \cite{DIV}, where freeness and unstability are related with failing SLP, an equivalent conjecture
was given in terms of SLP, that here we complete with the missed hypothesis on syzygies. 
\begin{conj}\cite{DIV}\label{conj_terao}
Let $\ell_1\cdots \ell_{2b+1}$ and $h_1\cdots h_{2b+1}$ be two arrangements with the same combinatorics and such that the dual sets of points have at most $b+1$ aligned points.
If $I=({{\ell}_1}^b,\ldots,{{\ell}_{2b+1}}^b)$ has SLP at range 2 in degree $b-2$ then also $J=(h_1^b,\ldots,h_{2b+1}^b) $ has SLP at range $2$ in degree $b-2$.
\end{conj}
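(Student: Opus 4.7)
The plan is to reduce the conjecture, via Proposition \ref{prop_bundle2}, to a statement about the general splitting type of the derivation bundle. With $r = 2b+1$ lines and no $b+2$ aligned dual points, both ideals $I$ and $J$ are minimally generated in degree $b$ by Theorem \ref{th_aligned}, so no $0$-syzygies occur and Proposition \ref{prop_bundle2} applies to both. Hence $I$ has SLP at range $2$ in degree $b-2$ iff $\mathcal{D}_0(Z_\ell)$ has balanced generic splitting type $(b,b)$, and similarly for $J$. Since the first Chern class $c_1(\mathcal{D}_0(Z)) = -2b$ is determined by the number of lines alone, the generic splitting type is $(b-s, b+s)$ for a unique $s \geq 0$, and the claim reduces to showing that this single integer $s$ is a function of the combinatorics of $\mathcal{A}$.

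Next I would translate balancedness into a concrete cohomological condition. Applying Theorem \ref{thgen} with parameters $(r,i,k,d,s)=(2b+1,0,2,b,0)$, a direct binomial check shows $N_s = (2b+1) - (r_b - r_{b-2}) = 0$, so $(R/I)_{b-2}$ and $(R/I)_b$ have the same dimension and SLP at range $2$ in degree $b-2$ fails exactly when $\times L^2$ is not an isomorphism. By condition \eqref{item_hypersurf} of Theorem \ref{thgen}, together with Theorem \ref{EI}, this failure is equivalent to the existence, for a general $P \in \p^2$, of a plane curve of degree $b$ passing through the dual set $Z$ and having multiplicity at least $b-1$ at $P$. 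The conjecture therefore becomes: for a generic $P$, the value of $\hh^0(\cI_Z(b) \otimes \cI_P^{b-1})$ depends only on the intersection lattice of $\mathcal{A}$.

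The main obstacle is precisely this last step, which is a fat-point analogue of Terao's freeness conjecture restricted to the balanced-splitting case. A natural attack is by semicontinuity over the realization variety of the matroid: since $Z \mapsto \hh^0(\cI_Z(b) \otimes \cI_P^{b-1})$ is upper semicontinuous, the locus of realizations with unbalanced generic splitting is Zariski closed, and it would suffice to prove that the realization space is irreducible and that the balanced locus is nonempty, the latter being guaranteed by the datum $Z_\ell$. Irreducibility fails for arbitrary matroids by Mn\"ev's universality theorem, so one cannot avoid some arrangement-specific input. For arrangements of at most $12$ lines the statement would follow from the case analysis of \cite{FV} combined with our translation.

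For a general attack, a plausible route is to extract an intrinsic combinatorial formula for $s$ by comparing $c_2(\mathcal{D}_0(Z))$ -- which is determined by the multiplicities at the intersection points and hence combinatorial -- with the jumps of the splitting type along lines of the arrangement, and then iterating on restricted subarrangements via the Euler-type exact sequences for logarithmic bundles. I expect this to be the decisive difficulty: pinning down $s$ purely from the intersection lattice appears essentially equivalent to proving Terao's conjecture for the relevant class of arrangements, and any genuinely new input would likely come either from a vanishing theorem for $\hh^0((\cI_Z \otimes \cI_P^{b-1})(b))$ expressed in matroid-theoretic terms, or from a monodromy argument on the balanced stratum of the realization space specific to arrangements with $2b+1$ lines.
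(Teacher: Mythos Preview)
The statement you are addressing is a \emph{Conjecture}, not a theorem: the paper gives no proof, presenting it instead as the SLP reformulation of Terao's conjecture from \cite{DIV}, now completed with the missing hypothesis on aligned dual points. So there is no ``paper's own proof'' to compare against.

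Your proposal is not a proof either, and you essentially acknowledge this. The first two paragraphs are a correct and clean reduction: via Theorem~\ref{th_aligned} the hypothesis on aligned points guarantees minimal generation, so Proposition~\ref{prop_bundle2} applies and the SLP condition is equivalent to the generic splitting type of $\mathcal{D}_0(Z)$ being balanced $(b,b)$; the conjecture then becomes the assertion that this balancedness depends only on the intersection lattice. This is exactly the content the paper intends by calling Conjecture~\ref{conj_terao} an equivalent form of Terao's conjecture. From that point on, however, you do not close the gap: the semicontinuity argument over the realization space is blocked by Mn\"ev universality, as you note, and the remaining suggestions (extracting $s$ from $c_2$ and restriction sequences, monodromy on the balanced stratum) are heuristic programme sketches rather than arguments. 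In short, your reduction is sound and matches the paper's framing, but the decisive step---combinatoriality of the generic splitting type---remains exactly as open in your proposal as it does in the paper.
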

Now, we end this note with a remark that links Example \ref{CHMN_ex} with Theorem \ref{thgen}.
If there is no $0$-syzygy, $N^- =0$ and the existence of a suitable singular curve is enough to the failure of SLP. Moreover, this hypothesis on $0$-syzygies is necessary, otherwise the unstability is not enough to get an ideal that fails SLP: in Example \ref{CHMN_ex} $H_{R/I}(8)=33=45-(17-5)$ that means that there are $s=5$ independent $0$-syzygies and the ideal has SLP and, moreover, the difference between Hilbert functions is $H_{R/I}(8)-H_{R/I}(6)=-5$. From this we got the idea to generalize Theorem \ref{th1bis} when there are $s$ syzygies in degree $i$ and no syzygy in degree $i-k$ just by replacing the integer $N(r,i,k,d)$ defined in \cite{DIV} with $N(r,i,k,d,s)$ introduced in Definition \ref{def_Ns}. In Example \ref{CHMN_ex} we get $N^-_s=5$ and the existence of singular curves is expected (in the sense of \cite{CHMN16}) and is not enough to the failure of SLP, by applying Theorem \ref{thgen}.
\\ We recall here the definition of unexpected curve.
\begin{defi}(\cite[Definition 2.1]{CHMN16})
We say that a reduced finite set of points $Z\subset \mathbb P^2$ admits an unexpected
curve if there is an integer $j > 0$ such that, for a general point $P$, $jP$ fails to impose the
expected number of conditions on the linear system of curves of degree $j + 1$ containing $Z$.
That is, $Z$ admits an unexpected curve of degree $j + 1$ if
$$
 h^0((\mathcal I_Z \otimes {\mathcal I_P}^j )(j + 1)) > \max \left\{h^0(\mathcal I_Z(j + 1)) -\left( \begin{array}{c}
j + 1\\
2
\end{array}\right)
; 0\right\}.
$$
\end{defi}
A very simple calculation shows that for $k=2,i=0, d=j+1$ we have $$N^-_s=\max \left\{h^0(\mathcal I_Z(j + 1)) -\left( \begin{array}{c}
j + 1\\
2
\end{array}\right)
; 0\right\},$$ so that we have the following special case of \cite[Theorem 6.5]{CHMN16}.
\begin{cor}\label{cor_unexp}
Let $I=(l_1^d, \ldots, l_{2d+1}^d)\subset R=\C[x,y,z]$ be an  artinian ideal generated by $2d+1$ distinct powers of linear forms in $\p^2$.
Let  $Z=\{l_1^{\vee},\ldots, l_{2d+1}^{\vee}\}$ be the corresponding set of points in $\p^{2\vee}$.  If there are no more than $d+1$ aligned points in $Z$, then the following are equivalent:
\begin{enumerate}[1.]
\item $Z$ has an unexpected curve of degree $d$;
\item $I$ fails the SLP in range $2$ and degree $d-2$;
\item the variety $\pi_{I_d}(v_d(\p^2))$ satisfies at least one non trivial Laplace equation of order $d-2$.
\end{enumerate}
\end{cor}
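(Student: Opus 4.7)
\medskip

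\noindent\textbf{Proof plan.} The strategy is to apply Theorem \ref{thgen} with $n=2$, $i=0$, $k=2$, $r=2d+1$ and then translate the resulting conditions into the language of unexpected curves via the Emsalem--Iarrobino identification (Theorem \ref{EI}). First I would check the hypotheses of Theorem \ref{thgen}. The requirement ``no syzygy of degree $i-k=-2$'' is automatic since $R$ is positively graded. The integer $s=h^0(K)$ is the number of linear dependencies among $l_1^d,\ldots,l_{2d+1}^d$, so by Theorem \ref{th_aligned} the hypothesis ``no more than $d+1$ aligned points in $Z$'' gives exactly $s=0$. A direct binomial computation yields $N(r,0,2,d)=(2d+1)-\bigl(\binom{d+2}{2}-\binom{d}{2}\bigr)=0$, hence $N_s=0$ and $N^{+}_s=N^{-}_s=0$.

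With these numerical inputs, Theorem \ref{thgen} furnishes the equivalence of (2) and (3) immediately: condition \eqref{item_SLP} says $I$ fails SLP at range $2$ in degree $d-2$ by some $\delta\ge 1$, and condition \eqref{item_equation} says $\pi_{I_d}(v_d(\p^2))$ satisfies $\delta\ge 1$ non-trivial Laplace equations of order $d-2$ (and none of smaller order, which is vacuous here since the map $\times L^k$ from degree $-2$ is irrelevant). So the bulk of the work is the equivalence with (1).

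For this step I would use condition \eqref{item_hypersurf} of Theorem \ref{thgen}, which here reads
\[
\dim_{\C}\bigl((I_{d}^{\perp})^{*}\cap \HH^{0}(\cI_{L^{\vee}}^{d-1}(d))\bigr)=\delta,\qquad \delta\ge 1.
\]
Apolar/Macaulay duality identifies $(I_d^{\perp})^{*}\cong R_d/I_d$, and Theorem \ref{EI} (applied with $j=d$) gives $\dim(R/I)_d=h^0(\cI_Z(d))$. Under the identification of the apolar space of $I_d$ with the space of degree-$d$ forms vanishing on $Z$, the extra vanishing condition at $L^{\vee}=P$ to order $d-1$ translates exactly into the intersection with $\HH^0(\cI_P^{d-1}(d))$, so
\[
(I_{d}^{\perp})^{*}\cap \HH^{0}(\cI_{L^{\vee}}^{d-1}(d))\;\cong\; \HH^{0}\bigl(\cI_Z\otimes \cI_P^{d-1}(d)\bigr).
\]
Combined with $s=0$ and $h^0(\cI_Z(d))=\binom{d}{2}$, the right-hand side of the unexpected-curve inequality in Definition \ref{defi} collapses to $\max\{0;0\}=0$, so $Z$ admits an unexpected curve of degree $d$ precisely when $h^0(\cI_Z\otimes \cI_P^{d-1}(d))\ge 1$, matching $\delta\ge 1$.

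\medskip

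\noindent\textbf{Main obstacle.} The only genuinely non-routine step is the geometric identification
\[
(I_{d}^{\perp})^{*}\cap \HH^{0}(\cI_{L^{\vee}}^{d-1}(d))\;\cong\; \HH^{0}\bigl(\cI_Z\otimes \cI_P^{d-1}(d)\bigr),
\]
which requires being careful about how Emsalem--Iarrobino matches apolar forms with forms vanishing on $Z$ and how the multiplicity-$(d-1)$ condition at $L^{\vee}$ transfers across this duality. Once this is in place, the Corollary follows by reading off conditions \eqref{item_SLP}, \eqref{item_equation}, \eqref{item_hypersurf} of Theorem \ref{thgen} and rewriting \eqref{item_hypersurf} as the unexpected-curve condition, as observed in the discussion preceding the statement.
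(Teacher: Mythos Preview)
Your proposal is correct and follows essentially the same route as the paper: specialize Theorem~\ref{thgen} to $n=2$, $i=0$, $k=2$, $r=2d+1$, use Theorem~\ref{th_aligned} to convert the alignment hypothesis into $s=0$, compute $N_s=0$, and then identify condition~\eqref{item_hypersurf} with the unexpected-curve inequality via Emsalem--Iarrobino. The only cosmetic difference is that the paper records the identity $N^{-}_s=\max\{h^0(\cI_Z(d))-\binom{d}{2},0\}$ for arbitrary $s$ (which under the hypothesis still collapses to $0$), whereas you impose $s=0$ from the outset; mathematically this is the same computation.
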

%
%
%
%
%
%
\section*{Acknowledgement} We thank Jean Vall\`{e}s for his valuable comments, David Cook II, Brian Harbourne, Juan Migliore and Uwe Nagel for their remarks on \cite{DIV} and the referee for her/his careful and precise reading and her/his suggestions.
\section*{References}

%
%
\end{document}